\def\d{{\rm d}v_{g}}
\def\weak{\rightharpoonup}
\def\embd{\hookrightarrow}
\definecolor{bluecite}{HTML}{0875b7}
\numberwithin{equation}{section}
\numberwithin{figure}{section}
\newtheorem{theorem}{Theorem}[section]
\newtheorem{lemma}{Lemma}[section]
\newtheorem{example}{Example}[section]
\newtheorem{remark}{Remark}[section]
\newcommand{\ds}{\displaystyle}
\numberwithin{equation}{section}
\definecolor{acsblue}{RGB}{17,76,139}
\title[Sobolev compactness versus orbit expansions]{Compact Sobolev embeddings on non-compact manifolds via orbit expansions of isometry groups
}
\author{Csaba Farkas}
\address{Sapientia Hungarian University of Transylvania\\ Department of Mathematics and Computer Science, Tg. Mure\c s, Romania.}
\email{farkascs@ms.sapientia.ro, farkas.csaba2008@gmail.com}
\author{Alexandru Krist\'aly}
\address{Babe\c s-Bolyai University, Department of Economics, Cluj-Napoca, Romania.\\
Institute of Applied Mathematics, \'Obuda University, Budapest, Hungary.}
\email{alex.kristaly@econ.ubbcluj.ro, kristaly.alexandru@nik.uni-obuda.hu}
\author{\'Agnes Mester}
\address{Institute of Applied Mathematics, \'Obuda University, Budapest, Hungary.}
\email{mester.agnes@stud.uni-obuda.hu}
\renewenvironment{abstract}{%
	\ifx\maketitle\relax
	\ClassWarning{\@classname}{Abstract should precede
		\protect\maketitle\space in AMS documentclasses; reported}%
	\fi
	\global\setbox\abstractbox=\vtop \bgroup
	\normalfont
	\list{}{\labelwidth\z@
		\leftmargin0pc \rightmargin\leftmargin
		\listparindent\normalparindent \itemindent\z@
		\parsep\z@ \@plus\p@
		
	}%
	\item[]\vskip-\baselineskip
	\begin{mdframed}[backgroundcolor=gray!20,hidealllines=true]
		\item[\hskip\labelsep\scshape{\color{acsblue}\normalfont\textbf{Abstract:}}]%
	}{%
	\end{mdframed}\endlist\egroup
	\ifx\@setabstract\relax \@setabstracta \fi
}
\begin{document}
\maketitle
\begin{abstract}
	Given a complete non-compact  Riemannian manifold $(M,g)$ with certain curvature restrictions, we introduce an expansion condition concerning a group of isometries $G$ of $(M,g)$ that characterizes the coerciveness of $G$ in the sense of Skrzypczak and Tintarev (Arch. Math., 2013). Furthermore, under these conditions, 	 compact Sobolev-type embeddings \`a la Berestycki-Lions are proved for the full range of admissible parameters (Sobolev, Moser-Trudinger and Morrey). 
We also consider the case of non-compact Randers-type Finsler manifolds with finite reversibility constant inheriting similar embedding properties as their Riemannian companions; sharpness of such constructions are shown by means of the Funk model. 
As an application, a quasilinear PDE  on Randers spaces is studied by using the above compact embeddings and variational arguments. 

	\vspace*{0.3cm}
	
	\noindent {\color{acsblue}\normalfont\textbf{Keywords:}} Sobolev embeddings; compactness;  Riemannian/Finsler manifolds; isometries. \\
	
		\noindent {\color{acsblue}\normalfont\textbf{2020 Mathematics Subject Classification:}} 58J05, 53C60, 58J60.
\end{abstract}

	\section{Introduction and main results}\label{intro}
	Compact Sobolev embeddings turn out to be fundamental tools in the study
	of variational problems, being frequently used to study the existence of solutions to elliptic equations, see e.g. Willem \cite{Willem}. More precisely, they are used for proving essential properties of the energy functionals associated with the studied problems (such as sequential lower semicontinuity or the Palais–Smale condition), in order to apply certain minimization and/or minimax arguments.
	
	If $\Omega\subseteq \mathbb R^d$ is an open set with sufficiently smooth boundary in the Euclidean space $\mathbb{R}^d$, it is well known that the Sobolev space $W^{1,p}(\Omega)$ can be continuously embedded into the Lebesgue space $L^q(\Omega),$ assuming  the parameters $p$ and $q$ verify the range properties: (i) $p\leq q\leq p^*:=\frac{pd}{d-p}$ if $p<d;$ (ii) $q\in [p,+\infty)$ if $p=d$, and (iii)  $q=+\infty$ if $p>d$. 
	%
	On one hand, when $\Omega$ is \textit{bounded}, due to the Rellich-Kondrachov  theorem, the previous embeddings are all compact injections, see Brezis \cite{Brezis}. On the other hand, when $\Omega$ is \textit{unbounded},  the aforementioned compactness need not 
	hold, see Adams and Fournier \cite{Adams}; for instance, if $\Omega=\mathbb R^d$, the dilation and translation of functions preclude such compactness phenomena. However, symmetries  may recover compactness; indeed, it was proved by Berestycki--Lions (see Berestycki and Lions \cite{BL}, Lions \cite{Lions}, and also Cho and Ozawa \cite{KinaiakSzimm}, Ebihara and Schonbek \cite{JapanBeagyazas}, Strauss \cite{Strauss}, and Willem \cite{Willem}) that if $\ p\leq d$ then the embedding $W^{1,p}_{\mathrm{rad}}(\mathbb{R}^d)\hookrightarrow L^q(\mathbb{R}^d)$ is compact  whenever $p<q<p^*$, where $W^{1,p}_{\mathrm{rad}}(\mathbb{R}^d)$ stands for the subspace of radially symmetric functions of $W^{1,p}(\mathbb{R}^d)$, i.e. $$W^{1,p}_{\mathrm{rad}}(\mathbb{R}^d)=\left\{u\in W^{1,p}(\mathbb{R}^d): u(\xi x)=u(x)\ \mbox{ for all }\xi\in O(d)\right\},$$  
	where $O(d)$ is the  orthogonal group in $\mathbb R^d$. 
	In the case of Morrey--Sobolev embeddings, it turns out that  $W^{1,p}_{\rm rad}(\mathbb{R}^d)$ can be also compactly embedded into $L^\infty(\mathbb{R}^d)$ when $2\leq d<p<+\infty$, see Krist\'aly \cite{KristalyJDE2006}. 
	
	Geometrically, Berestycki–Lions' compactness  is based on a careful estimate of the  functions at infinity. One first observes that the maximal number of mutually disjoint balls having a fixed radius and centered on the orbit  $\{\xi x:\xi \in O(d)\}$ tends to infinity whenever $|x|\to \infty;$ this phenomenon is similar to the maximal number of disjoint patches with fixed diameter on a balloon with continuous expansion. Now, the latter expansiveness property of the balls combined with the invariance of the Lebesgue measure w.r.t. translations implies that the radially symmetric functions rapidly decay to zero at infinity; this fact is crucial to recovering compactness of Sobolev embeddings on  unbounded  domains, see e.g. Ebihara and  Schonbek \cite{JapanBeagyazas}, Krist\'aly \cite{KristalyJDE2006} and Willem \cite{Willem}; moreover, this argument is in full concordance with the initial approach of Strauss \cite{Strauss}.

	Notice that a  Berestycki–Lions-type theorem has  been  established on Riemannian manifolds by Hebey and Vaugon \cite{HebeyVaugon}, see also Hebey \cite[Theorems 9.5 \& 9.6]{Hebey-konyv1}. More precisely, if $G$ is a compact subgroup of the group of global isometries of the complete Riemannian manifold $(M, g)$, then (under additional assumptions
	on the geometry of $(M, g)$ and  on the orbits under the action of $G$)                                                                                                                                                                                                                                                                                                                                                            
	the embedding $W^{1,p}_G(M)\hookrightarrow L^q(M)$ is compact, where $W^{1,p}_G(M)$ denotes the set of $G$-invariant functions of $W_g^{1,p}(M)$.  Berestycki–Lions-type compactness results have been extended to non-compact metric measure spaces as well, see G\'orka \cite{Gorka}, and generalized to Lebesgue–Sobolev spaces $W_G^{1,p(\cdot)}(M)$ in the setting of complete Riemannian manifolds, see Gaczkowski, G\'{o}rka and Pons \cite{LengyelekJFA} and Skrzypczak \cite{Skrzypczak}.  	
	                                                                                                                                                                                                               
 Skrzypczak and Tintarev \cite{SkrzypczakTintarev, TintarevKonyv} identified general geometric conditions that are behind the compactness of Sobolev embeddings of the type $W^{1,p}_G(M)\hookrightarrow L^q(M)$ for certain ranges of $p$ and $q$; their studies deeply depend  on the curvature of the Riemannian manifold. In the light of their works,                                          
	our purpose is twofold; namely, we provide an alternative characterization of the properties described by Skrzypczak and Tintarev \cite{SkrzypczakTintarev, TintarevKonyv} by using the expansion of geodesic balls and state the compact Sobolev embeddings of isometry-invariant Sobolev functions to Lebesgue spaces for the full admissible range of parameters. Given $d\in \mathbb N$ with $d\geq 2$, we say that $(p,q)\in (1,\infty)\times (1,\infty]$ is a $d$-\textit{admissible pair} whenever 
	\begin{itemize}
		\item[(\textbf{S}):]\label{S} $p<q<p^*=\frac{pd}{d-p}$ if $1<p<d$ (Sobolev-type);
		\item[(\textbf{MT}):]\label{MT} $q\in (p,\infty)$ if $p=d$ (Moser-Trudinger-type);
		\item[(\textbf{M}):]\label{M} $q=+\infty$ if $p>d$ (Morrey-type).
	\end{itemize}

 	 In order to present our results, let $(M,g)$ be a complete $d$-dimensional  Riemannian manifold, and let 
	 $d_g:M\times M\to [0,\infty)$ be the distance function induced by the Riemannian metric $g$.  
	 Denote by $\mathrm{Isom}_g(M)$ the isometry group of the manifold $(M,g)$. It is well-known that $\mathrm{Isom}_g(M)$ is a Lie group with respect to the compact open topology and it acts differentiably on $M$. Let $G$ be a compact connected subgroup of $\mathrm{Isom}_g(M)$. In the sequel, we denote the action of an element $\xi \in G$ by $\xi x \coloneqq \xi(x)$ for every  $x \in M$.   Let $$\mathrm{Fix}_M(G)=\{x\in M:\xi x = x\ \mbox{for all } \xi\in G\}$$ be the \textit{fixed point set} of $G$ on $M$. 
	 Denote by $\mathcal{O}_G^x = \{\xi x: \xi \in G\}$ the $G$-\textit{orbit} of the point $x\in M$. 
	 The subspace of $W_g^{1,p}(M)$ consisting by $G$-\textit{invariant function}s is 
	 \begin{equation*}
	 W^{1,p}_G(M)=\left\{u\in W^{1,p}_g(M): u\circ \xi =u\ \mbox{ for all }\xi \in G\right\}.
	 \end{equation*}
	 Since $G$ is a subgroup of isometries,  $W^{1,p}_G (M)$ turns out to be a closed subspace of $W_g^{1,p}(M)$. We say that a continuous action of a group $G$ on a complete Riemannian manifold $M$ is \textit{coercive} (see Tintarev \cite[Definition 7.10.8]{TintarevKonyv} or Skrzypczak and Tintarev \cite[Definition 1.2]{SkrzypczakTintarev}), if for every $t>0$, the set 
	 \[
	 \mathscr{O}_{t}:=\{x\in M:\:\mathrm{diam}\mathcal{O}_G^x\leq t\}
	 \]
	 is bounded.
	 Let $m(y,\rho)$ be the 
	 maximal number of mutually disjoint geodesic balls with radius $\rho$ on  $\mathcal{O}_G^y$, i.e.
	 \begin{equation}\label{myrho}
	 m(y,\rho)=\sup\left\{n\in \mathbb{N}:\exists \xi_1,\dots,\xi_n\in G\ {\rm such\ that}\ B_g(\xi_iy,\rho)\cap B_g(\xi_jy,\rho)=\emptyset,\forall i\neq j\right\},
	 \end{equation}
	 where $B_g(x,\rho)=\{z\in M:d_g(x,z)<\rho\}$ is the usual metric ball in $M$. For $\rho>0$ and $x_0\in M$ fixed, we introduce the following \textit{expansion condition} 
	 \begin{description}
	 	\item[$(\mathbf{EC})_G$]\label{D}   $m(y,\rho)\to \infty$ as $d_g(x_0,y)\to \infty$.
	 \end{description}
	 Clearly, condition $(\mathbf{EC})_G$ is independent of the choice of $x_0$.

	Now, we are in the position to state the first main result, concerning \textit{Hadamard manifold}s (i.e., simply connected, complete Riemannian manifolds with non-positive sectional curvature):
	
		\begin{theorem}\label{mainThm:Riemann1}
		Let $(M,g)$ be a $d$-dimensional Hadamard manifold, and let $G$
		be a compact connected subgroup of $\mathrm{Isom}_{g}(M)$ such that
		$\mathrm{Fix_{M}}(G)\neq\emptyset.$ Then the following statements are equivalent:
		\begin{enumerate}
			\item[{(i)}]  $G$ is coercive$;$
			\item[{(ii)}] $\ensuremath{\mathrm{Fix}_{M}(G)}$ is a singleton$;$
			\item[{(iii)}]  $(\mathbf{EC})_G$  holds.
			
		\end{enumerate}
		Moreover, from any of the above statements it follows that the embedding $W_{G}^{1,p}(M)\hookrightarrow L^q(M)$
		is compact for every $d$-admissible pair $(p,q)$. 
	\end{theorem}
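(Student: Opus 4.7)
The plan is to establish the equivalences via the cycle (i)$\Rightarrow$(ii)$\Rightarrow$(iii)$\Rightarrow$(i), and then derive the compact embedding from $(\mathbf{EC})_G$ through a Berestycki--Lions--Strauss-type decay estimate. The implications (i)$\Rightarrow$(ii) and (iii)$\Rightarrow$(i) are relatively short. For (i)$\Rightarrow$(ii), if $\mathrm{Fix}_M(G)$ contains two distinct points $x_1,x_2$, then uniqueness of geodesics on a Hadamard manifold forces every $\xi\in G$ to fix the connecting segment pointwise, and since an isometry of a complete connected manifold that fixes a point together with a tangent direction fixes the entire geodesic issuing from that direction, the full unbounded geodesic line lies in $\mathrm{Fix}_M(G)$. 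Every point on it has a singleton orbit, so $\mathscr{O}_t$ is unbounded for every $t>0$, contradicting coerciveness. For (iii)$\Rightarrow$(i), given $t>0$ fix any $\rho>t/2$: if $x\in\mathscr{O}_t$ then $\mathrm{diam}\,\mathcal{O}_G^x\leq t<2\rho$ forbids two disjoint $\rho$-balls centered on the orbit, whence $m(x,\rho)=1$; by $(\mathbf{EC})_G$ this forces $d_g(x_0,x)$ to be uniformly bounded, so $\mathscr{O}_t$ is bounded.

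The main obstacle is (ii)$\Rightarrow$(iii). Assume $\mathrm{Fix}_M(G)=\{x_0\}$. Since $G$ fixes $x_0$, the exponential map conjugates the $G$-action to the linear isotropy representation $G_0=\{d\xi|_{x_0}:\xi\in G\}\subset O(T_{x_0}M)$ via $\exp_{x_0}\circ d\xi|_{x_0}=\xi\circ\exp_{x_0}$; the singleton assumption translates to: $G_0$ admits no nonzero fixed vector, since otherwise $\exp_{x_0}$ of the fixed axis would yield additional fixed points. Continuity of $u\mapsto\mathrm{diam}(G_0\cdot u)$ on the compact unit sphere then yields $\delta_0:=\inf_{|u|=1}\mathrm{diam}(G_0\cdot u)>0$. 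By the Cartan--Hadamard theorem $\exp_{x_0}$ is a diffeomorphism, and Rauch's comparison theorem for non-positively curved manifolds makes it distance non-decreasing, so for $y=\exp_{x_0}(v)$ with $R:=|v|$ we obtain $\mathrm{diam}\,\mathcal{O}_G^y\geq\delta_0 R$. Because $G$ is connected, $\mathcal{O}_G^y$ is path-connected; along any continuous path between two diameter-realizing orbit points the distance-from-endpoint functional takes all intermediate values, allowing the extraction of $\lfloor\delta_0 R/(2\rho)\rfloor$ orbit points at mutual distance $\geq 2\rho$. Hence $m(y,\rho)\to\infty$ as $d_g(x_0,y)\to\infty$.

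For the compactness assertion, the key ingredient is the orbit-averaging inequality
\[
m(x,\rho)\int_{B_g(x,\rho)}\bigl(|u|^p+|\nabla u|^p\bigr)\,dv_g\ \leq\ \|u\|_{W^{1,p}_g(M)}^p,\qquad u\in W^{1,p}_G(M),
\]
valid for every $x\in M$ and $\rho>0$: the $m(x,\rho)$ disjoint balls centered on $\mathcal{O}_G^x$ are isometric copies of $B_g(x,\rho)$ on each of which $u$ and $|\nabla u|$ take identical values by $G$-invariance. Coupling this bound with the local Sobolev--Poincar\'e inequality on a ball when $p<d$, a local Moser--Trudinger-type estimate when $p=d$, or Morrey's H\"older embedding when $p>d$, and invoking $(\mathbf{EC})_G$ (so $m(x,\rho)\to\infty$ as $x$ leaves every compact set), one shows that any bounded sequence $\{u_n\}\subset W^{1,p}_G(M)$ decays uniformly in the relevant target norm outside $B_g(x_0,R)$ as $R\to\infty$. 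Combined with local Rellich--Kondrachov compactness inside $B_g(x_0,R)$, together with an Arzel\`a--Ascoli argument for the Morrey range $q=\infty$, a standard truncation procedure delivers strong $L^q(M)$-convergence along a subsequence.
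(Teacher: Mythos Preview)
Your proposal is correct, and the overall architecture---orbit-averaging plus local Rellich--Kondrachov to recover compactness---matches the paper. However, several individual steps take a different (and in places more economical) route than the paper's.

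\textbf{Cycle structure.} The paper does not close the loop as you do. It cites Skrzypczak--Tintarev for (i)$\Leftrightarrow$(ii), then proves (ii)$\Rightarrow$(iii) and (iii)$\Rightarrow$(ii) separately. Your direct argument (iii)$\Rightarrow$(i) via ``$\mathrm{diam}\,\mathcal{O}_G^x\leq t<2\rho\Rightarrow m(x,\rho)=1$'' is shorter than the paper's (iii)$\Rightarrow$(ii), which manufactures an unbounded sequence of fixed points along a geodesic by a non-branching argument.

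\textbf{(ii)$\Rightarrow$(iii).} Here the approaches genuinely diverge. The paper fixes a unit geodesic $\gamma$ from $x_0$, picks countably many isometries $\xi_i$, transplants the balls $B_g(\xi_i\gamma(t),\rho)$ to $T_{x_0}M$ via $\exp_{x_0}^{-1}$ using Rauch comparison, and then argues with the angles $\alpha_{ij}$ between the directions $\exp_{x_0}^{-1}(\xi_i\gamma(t))$ that the number of disjoint balls in the tangent space grows without bound. Your argument via the isotropy representation is more structural: no fixed vector for $G_0\subset O(T_{x_0}M)$ plus compactness of the unit sphere gives a uniform lower bound $\mathrm{diam}(G_0\cdot u)\geq\delta_0>0$, which Rauch upgrades to $\mathrm{diam}\,\mathcal{O}_G^y\geq\delta_0\,d_g(x_0,y)$; orbit points at mutual distance $\geq 2\rho$ are then extracted along a connected path in $\mathcal{O}_G^y$. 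This last extraction step is in fact exactly what the paper uses---but only later, in the proof of Theorem~\ref{mainThm:Riemann2} (the (i)$\Rightarrow$(ii) direction there). Your route yields the explicit growth $m(y,\rho)\gtrsim d_g(x_0,y)/\rho$, whereas the paper's angle argument only gives $m(y,\rho)\to\infty$ without a rate.

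\textbf{Compactness.} For the $(\mathbf{S})$ and $(\mathbf{MT})$ ranges the paper simply cites Skrzypczak--Tintarev and earlier work; for the Morrey range $(\mathbf{M})$ it spells things out, and the crux is showing $\inf_{y\in M}S(y,\rho)^{-1}>0$, i.e.\ that the local Morrey embedding constant on $B_g(y,\rho)$ is uniform in $y$. The paper handles this by Euclidean rearrangement and the P\'olya--Szeg\H{o} inequality (Lemma~\ref{PolyaSzego}), reducing to a fixed Euclidean ball. Your sketch invokes ``the local Sobolev--Poincar\'e/Morrey inequality on a ball'' without addressing this uniformity; on a Hadamard manifold it does hold, but it is the one place where a reader might ask for justification, and it is precisely the point the paper takes pains to establish.
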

	We notice that the equivalence between (i) and (ii) in Theorem \ref{mainThm:Riemann1} is proved by Skrzypczak and Tintarev \cite[Proposition 3.1]{SkrzypczakTintarev}, from which they conclude the compactness of the embedding $W_{G}^{1,p}(M)\hookrightarrow L^q(M)$
	for the admissible case (\textbf{S}); for a similar result in the case  (\textbf{MT}), see Krist\'aly \cite{KristalyJFA}. Accordingly, our purpose in Theorem \ref{mainThm:Riemann1} is to characterize their geometric properties by our expansion condition $(\mathbf{EC})_G$, by applying a careful constructive argument  based on the Rauch comparison principle, complementing also the admissible range of parameters in the Morrey-case (\textbf{M}).   
	
Our next result concerns Riemannian manifolds with \textit{bounded geometry} (i.e., complete non-compact Riemannian manifolds with Ricci curvature bounded from below having positive injectivity radius):

\begin{theorem}\label{mainThm:Riemann2}
	Let $(M,g)$ be a $d$-dimensional Riemannian manifolds with bounded geometry, and let $G$
	be a compact connected subgroup of $\mathrm{Isom}_{g}(M)$. Then the following statements are equivalent:
	\begin{enumerate}
			\item[{(i)}]  $G$ is coercive$;$
		\item[{(ii)}]  $(\mathbf{EC})_G$ holds$;$
		\item[{(iii)}]  the embedding $W_{G}^{1,p}(M)\hookrightarrow L^q(M)$	is compact for {\rm every} $d$-admissible pair $(p,q);$
		\item[{(iv)}]  the embedding $W_{G}^{1,p}(M)\hookrightarrow L^q(M)$	is compact for {\rm some} $d$-admissible pair $(p,q)$.  
	\end{enumerate}
\end{theorem}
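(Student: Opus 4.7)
The plan is to close the cycle (iv) $\Rightarrow$ (i) $\Rightarrow$ (ii) $\Rightarrow$ (iii) $\Rightarrow$ (iv), the last implication being tautological. For (i) $\Rightarrow$ (ii) I would argue without invoking bounded geometry, using only that $G$-orbits are connected (since $G$ is connected, $\mathcal{O}_G^y$ is the image of a connected set under a continuous map). Given $y\in M$ and $\rho>0$, pick a maximal family $\xi_1 y,\dots,\xi_m y$ such that the balls $B_g(\xi_i y,\rho)$ are pairwise disjoint, so $m=m(y,\rho)$; by maximality $\mathcal{O}_G^y\subset\bigcup_i B_g(\xi_i y,2\rho)$, and the connectedness of $\mathcal{O}_G^y$ forces the nerve graph of this cover to be connected, hence of combinatorial diameter at most $m-1$. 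Since adjacent vertices correspond to centers at $g$-distance $\le 4\rho$, the triangle inequality yields $\mathrm{diam}(\mathcal{O}_G^y)\le 4\rho m$. Contrapositively, if $m(y_n,\rho)$ stays bounded while $d_g(x_0,y_n)\to\infty$, then $\mathrm{diam}(\mathcal{O}_G^{y_n})$ stays bounded too, violating coercivity.

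For (iv) $\Rightarrow$ (i) I rely on a standard test-function construction. If $G$ is not coercive, choose $t>0$ and $y_n\in M$ with $d_g(x_0,y_n)\to\infty$ and $\mathrm{diam}(\mathcal{O}_G^{y_n})\le t$. Fix $\rho$ smaller than the injectivity radius, and in normal coordinates at $y_n$ build a radial bump $\phi_n\in C^1_c(M)$ with $0\le\phi_n\le 1$, $\phi_n\equiv 1$ on $B_g(y_n,\rho/2)$ and $\mathrm{supp}\,\phi_n\subset B_g(y_n,\rho)$; set $u_n(x)=\sup_{\xi\in G}\phi_n(\xi x)$. Then $u_n$ is $G$-invariant and supported in the $t$-neighborhood $B_g(y_n,\rho+t)$ of $\mathcal{O}_G^{y_n}$, and the uniform two-sided volume bounds from bounded geometry make $\|u_n\|_{W^{1,p}}$ bounded and $\|u_n\|_{L^q}\ge c>0$ uniformly in $n$. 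Since $d_g(x_0,y_n)\to\infty$, the supports are eventually pairwise disjoint, so $u_n\weak 0$ in $W^{1,p}_G(M)$ while $\|u_n\|_{L^q}\not\to 0$, contradicting the compactness in (iv).

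The main step is (ii) $\Rightarrow$ (iii), which I plan to handle by a Lions-type concentration-compactness argument adapted to the manifold setting. Let $u_n\weak 0$ in $W^{1,p}_G(M)$. If $u_n\not\to 0$ in $L^q(M)$, a Lions concentration lemma on $(M,g)$---available thanks to bounded geometry, which yields uniform local Sobolev/Gagliardo--Nirenberg inequalities on geodesic $\rho$-balls together with a bounded-overlap covering of $M$---provides $\delta>0$ and $y_n\in M$ such that $\int_{B_g(y_n,1)}|u_n|^p\ge\delta$ in cases (\textbf{S}) and (\textbf{MT}), or, via the Morrey embedding $W^{1,p}\embd C^{0,\alpha}$, a point with $|u_n(y_n)|\ge\delta$ in case (\textbf{M}). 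If $\{y_n\}$ is bounded, the classical Rellich--Kondrachov theorem on a precompact geodesic ball containing $\{y_n\}$ (or Arzel\`a--Ascoli in case (\textbf{M})) forces $u_n\to 0$ there, contradicting the concentration. If $d_g(x_0,y_n)\to\infty$, hypothesis $(\mathbf{EC})_G$ gives $m(y_n,1)\to\infty$, and $G$-invariance combined with the fact that isometries preserve volume yields $\int_{B_g(\xi y_n,1)}|u_n|^p=\int_{B_g(y_n,1)}|u_n|^p$ for every $\xi\in G$. Summing over the $m(y_n,1)$ pairwise disjoint orbit-balls gives
\[
\|u_n\|_{L^p(M)}^p\ \ge\ m(y_n,1)\,\delta\ \longrightarrow\ \infty,
\]
contradicting the boundedness of $\{u_n\}$ in $W^{1,p}$ (in case (\textbf{M}) the pointwise bound $|u_n|\ge\delta/4$ near $y_n$ propagates by $G$-invariance to each orbit-ball of a small radius, and the lower volume bound from bounded geometry then delivers the same blow-up of $\|u_n\|_{L^p}$). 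The obstacle I expect to work hardest on is making the Lions-type concentration lemma rigorous uniformly across the three admissible ranges, especially in the Moser--Trudinger regime where $p^*=\infty$ makes the standard interpolation unavailable and one needs a direct Gagliardo--Nirenberg-based argument on $\rho$-balls.
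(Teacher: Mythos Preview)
Your cycle is sound, but two of the implications are handled differently from the paper, and it is worth recording the contrast.

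For (ii) $\Rightarrow$ (iii) the paper does \emph{not} go through a Lions dichotomy. Instead it proves a uniform lower bound $\inf_{y\in M}S(y,\rho)^{-1}>0$ on the local embedding constant of $W^{1,p}_g(B_g(y,\rho))\hookrightarrow L^q(B_g(y,\rho))$, obtained via harmonic coordinates (from bounded geometry) and symmetric decreasing rearrangement in $\mathbb{R}^d$. Combined with $G$-invariance this yields the Strauss-type decay
\[
\|u_n-u\|_{L^q(B_g(y,\rho))}\ \le\ \frac{S(y,\rho)}{m(y,\rho)^{1/p}}\,\|u_n-u\|_{W^{1,p}_g(M)}\ \longrightarrow\ 0\qquad(d_g(x_0,y)\to\infty),
\]
after which a near/far splitting and the local Rellich--Kondrachov lemma finish the proof. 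Your concentration-compactness route exploits the very same disjoint-orbit-ball mechanism but packages it as a contradiction rather than a quantitative decay. The paper's argument treats the three admissible ranges uniformly (one only changes which local embedding is invoked), whereas yours needs a separate concentration statement in each regime---the point you correctly flag as the main obstacle. Conversely, your approach avoids the rearrangement step entirely.

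For (iv) $\Rightarrow$ (i) the paper builds its $G$-invariant test functions by Haar averaging, $f_n(x)=\int_G (r-d_g(\xi x,x_n))_+\,\mathrm{d}\xi$, rather than your supremum $u_n(x)=\sup_{\xi\in G}\phi_n(\xi x)$. Both produce bounded sequences in $W^{1,p}_G(M)$ with drifting supports and $L^q$-norms bounded away from zero; the Haar average is automatically Lipschitz with gradient controlled by the eikonal identity, while your $\sup$ needs the (true but unstated) fact that a supremum of an equi-Lipschitz family is Lipschitz with the same constant. Your nerve-graph argument for (i) $\Rightarrow$ (ii) is a clean alternative to the paper's, which instead runs a geodesic inside the orbit $\mathcal{O}_G^{x_n}$ (a complete connected submanifold) and places $k_n+1$ disjoint $\rho$-balls along it to contradict the maximality of $k_n=m(x_n,\rho)$; both arguments deliver $\mathrm{diam}\,\mathcal{O}_G^{x_n}\le C\rho\,m(x_n,\rho)$.
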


In Theorem \ref{mainThm:Riemann2}, the equivalence between (i) and the compactness of the embedding $W_{G}^{1,p}(M)\hookrightarrow L^q(M)$
for every $d$-admissible pair $(p,q)$ in (\textbf{S}) is well known by Tintarev \cite[Theorem 7.10.12]{TintarevKonyv}; in addition, Gaczkowski, G\'{o}rka and Pons \cite{Gorka, LengyelekJFA} proved that a slightly stronger form of $(\mathbf{EC})_G$ implies (iii) in the (\textbf{S}) admissible case by using a Strauss-type argument. 
	Thus, the novelty of Theorem \ref{mainThm:Riemann2} is the equivalence of our expansion condition $(\mathbf{EC})_G$ not only with the coerciveness of $G$ but also with the validity of the compact embeddings in the full range of $d$-admissible pairs $(p,q)$. 

	Our next aim is to study similar compactness results on non-compact Finsler manifolds. We notice that in non-Riemannian Finsler settings the situation may change dramatically; indeed, there exist  non-compact Finsler--Hadamard manifolds $(M,F)$ such that the Sobolev space $W_F^{1,p}(M)$ over $(M,F)$ is \textit{not} even a vector space, see Farkas, Krist\'{a}ly and Varga \cite{FarkasKristalyVarga_Calc}, as well as Krist\'{a}ly and Rudas \cite{KristalyRudas_NATMA}. In spite of such examples, it turns out that similar compactness results to Theorems \ref{mainThm:Riemann1} \& \ref{mainThm:Riemann2} can be established on a subclass of Finsler manifolds, namely on Randers spaces with finite reversibility constant.  

	Randers spaces are specific non-reversible Finsler structures which are deduced as the solution of the Zermelo navigation problem. In fact, a 
	Randers metric shows up as a suitable perturbation of a Riemannian metric; more precisely, a Randers metric  on a manifold $M$ is a Finsler structure $F: TM \to \mathbb{R}$ defined as 
	\begin{equation} \label{Randers_metric}
		F(x,y) = \sqrt{g_x(y,y)} + \beta_x(y), \quad (x,y) \in TM,
	\end{equation} 
	where $g$ is a Riemannian metric and $\beta_x$ is a $1$-form on $M$.
	For further use, let      
	$\|\beta\|_g(x):=\sqrt{g_x^*(\beta_x,\beta_x)}$ for every $x\in M,$
	where $g^*$ is the co-metric of $g$.
	
	In order to state our result on Randers spaces, we emphasize that if $F$ is given by \eqref{Randers_metric}, then the isometry group of $(M,F)$ is a closed subgroup of the isometry group of the Riemannian manifold $(M,g)$, see Deng \cite[Proposition 7.1]{DengKonyv}. As usual, $W^{1,p}_{F,G}(M)$ stands for the subspace of $G$-invariant functions of $W^{1,p}_{F}(M)$, where $G$ is a subgroup of $\mathrm{Isom}_F(M)$, while $m_F(y,\rho)$ denotes the maximal number of mutually disjoint geodesic Finsler balls with radius $\rho$ on the orbit $\mathcal{O}_G^y$.
	

	\begin{theorem} \label{Finsler}
		Let $(M,F)$ be a $d$-dimensional Randers space endowed with the Finsler metric \eqref{Randers_metric}, such that $(M,g)$ is either a Hadamard manifold or a Riemannian manifold with bounded geometry. Let  $G$ be  a compact connected subgroup of $\mathrm{Isom}_F(M)$ such that $m_F(y,\rho)\to\infty$ as $d_{F}(x_{0},y)\to\infty$ for some $x_0\in M$ and $\rho>0$. 
		If  $\displaystyle \sup_{x\in M}\|\beta\|_g(x)<1$, then for every $d$-admissible pair $(p,q)$ the embedding $W^{1,p}_{F}(M)\hookrightarrow L^q(M)$ is continuous, while the embedding  $W^{1,p}_{F,G}(M)\hookrightarrow L^q(M)$ is compact. 
	\end{theorem}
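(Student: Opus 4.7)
The plan is to reduce the claim to the Riemannian theorems already proved, by showing that under $b:=\sup_{x\in M}\|\beta\|_g(x)<1$ every relevant Finsler object on $(M,F)$ is uniformly equivalent to its Riemannian analogue on $(M,g)$. First, from the Cauchy--Schwarz type bound $|\beta_x(y)|\leq \|\beta\|_g(x)\sqrt{g_x(y,y)}$ I would derive the pointwise two-sided estimate
\[(1-b)\sqrt{g_x(y,y)}\leq F(x,y)\leq (1+b)\sqrt{g_x(y,y)},\qquad (x,y)\in TM,\]
which integrated along admissible paths yields $(1-b)d_g(x,y)\leq d_F(x,y)\leq (1+b)d_g(x,y)$ for every ordered pair. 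Dualizing the fiberwise inequality gives $\tfrac{1}{1+b}|\eta|_{g^*}\leq F^*(x,\eta)\leq \tfrac{1}{1-b}|\eta|_{g^*}$ for every cotangent vector $\eta$, and the explicit Busemann--Hausdorff formula on a Randers space, $dv_F=(1-\|\beta\|_g^2)^{(d+1)/2}dv_g$, produces $(1-b^2)^{(d+1)/2}dv_g\leq dv_F\leq dv_g$.

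These three equivalences identify $W^{1,p}_F(M)$ with $W^{1,p}_g(M)$, and $L^q(M,dv_F)$ with $L^q(M,dv_g)$, as Banach spaces with uniformly equivalent norms, for every admissible $q$. Hence the continuous embedding $W^{1,p}_F(M)\hookrightarrow L^q(M)$ will follow immediately from the classical continuous Sobolev embedding $W^{1,p}_g(M)\hookrightarrow L^q(M)$, valid on both Hadamard manifolds and manifolds with bounded geometry for every $d$-admissible pair $(p,q)$. For the compact $G$-invariant embedding, I would verify the hypotheses of Theorem~\ref{mainThm:Riemann1} or Theorem~\ref{mainThm:Riemann2} for $G$ acting on $(M,g)$: by Deng \cite{DengKonyv}, $G\subseteq \mathrm{Isom}_F(M)\subseteq \mathrm{Isom}_g(M)$, so $G$ is a compact connected subgroup of Riemannian isometries. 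The distance comparison produces the ball nesting $B_g(x,\rho/(1+b))\subseteq B_F(x,\rho)\subseteq B_g(x,\rho/(1-b))$, so disjointness of Finsler balls forces disjointness of smaller Riemannian balls, giving $m(y,\rho/(1+b))\geq m_F(y,\rho)$; together with the equivalence $d_F(x_0,\cdot)\to\infty \Leftrightarrow d_g(x_0,\cdot)\to\infty$, this transfers the Finsler expansion hypothesis into the Riemannian condition $(\mathbf{EC})_G$. In the Hadamard case, Cartan's fixed-point theorem supplies $\mathrm{Fix}_M(G)\neq\emptyset$, so Theorem~\ref{mainThm:Riemann1} applies; in the bounded-geometry case, Theorem~\ref{mainThm:Riemann2} applies directly. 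Either yields compactness of $W^{1,p}_G(M)\hookrightarrow L^q(M,dv_g)$, which by the Sobolev-space identification above is precisely the desired compactness of $W^{1,p}_{F,G}(M)\hookrightarrow L^q(M,dv_F)$.

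The main obstacle is bookkeeping rather than any new structural idea: because $F$ is non-reversible, the distance $d_F$ is non-symmetric, so I must be careful that the definition of $m_F(y,\rho)$ and the tendency $d_F(x_0,y)\to\infty$ propagate correctly through the ball nestings in both directions. The dual-norm inequality also deserves explicit verification since $F^*$ is the support function of a non-symmetric unit ball; once all these estimates hold with uniform constants depending only on $b$, the theorem reduces to a transport of the Riemannian results through the equivalences established above.
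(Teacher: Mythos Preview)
Your proposal is correct and follows essentially the same route as the paper: both arguments establish the uniform equivalences $F\sim\sqrt{g}$, $F^*\sim|\cdot|_{g^*}$, $dV_F\sim dv_g$, and $d_F\sim d_g$ under the hypothesis $\sup_x\|\beta\|_g(x)<1$, identify $W^{1,p}_F(M)$ with $W^{1,p}_g(M)$ as equivalent normed spaces, invoke Deng's inclusion $\mathrm{Isom}_F(M)\subset\mathrm{Isom}_g(M)$, transfer the Finsler expansion hypothesis to the Riemannian condition $(\mathbf{EC})_G$ via the ball nesting $B_g(\cdot,\rho/(1+b))\subset B_F(\cdot,\rho)$, and then appeal to Theorems~\ref{mainThm:Riemann1} and~\ref{mainThm:Riemann2}. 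Your explicit invocation of Cartan's fixed-point theorem to secure $\mathrm{Fix}_M(G)\neq\emptyset$ in the Hadamard case is a detail the paper leaves implicit but which is indeed needed to place Theorem~\ref{mainThm:Riemann1} in play.
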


	In fact,   assumption $\displaystyle \sup_{x\in M}\|\beta\|_g(x)<1$ in Theorem \ref{Finsler} is equivalent to the finiteness of the reversibility constant of $(M,F)$ (see Section \ref{sec:Finsler}). Furthermore, Example \ref{FinslerPelda} shows that this assumption is indispensable. Indeed,   we prove that on the Finslerian Funk model $( B^d(1), F)$, --which is a non-compact Finsler manifold of Randers-type, having infinite reversibility constant,-- the space $W^{1,p}_{F}(B^d(1))$ \textit{cannot} be continuously embedded into $L^q(B^d(1))$ for every $d$-admissible pair $(p,q)$, thus no further compact embedding can be expected.
	
	
	\newpage
	
	In the sequel, we provide an  application of Theorem \ref{Finsler} in the admissible case (\textbf{M}); we notice that applications in the admissible cases (\textbf{S}) and (\textbf{MT}) can be found in Gaczkowski, G\'{o}rka and Pons \cite{LengyelekJFA} and Krist\'aly \cite{KristalyJFA}, respectively. Accordingly, in the last part of the paper we consider the following elliptic equation on the $d$-dimensional Randers space $(M,F)$ endowed with the metric \eqref{Randers_metric}, namely 
	\begin{equation}\label{feladat}
		\begin{cases}
			-\boldsymbol{\Delta}_{F,p}u(x) = \lambda \alpha(x) h(u(x)), & x\in M,\\
			u\in W_F^{1,p}(M),
		\end{cases}\tag{$\mathcal{P}_\lambda$}
	\end{equation}
	where $\boldsymbol{\Delta}_{F,p}$ is the \textit{Finsler $p$-Laplace operator} with $p > d$, $\lambda$ is a positive parameter, $\alpha \in L^1(M) \cap L^\infty(M)$ and $h:\mathbb{R} \to \mathbb{R}$ is a continuous function. 
	For each $s\in \mathbb{R}$, put $\displaystyle H(s)=\intop_0^s h(t)\,\mathrm{d}t$, and we further assume that:
	\begin{description}
		\item[($A_1$)]\label{f1} there exists $s_0>0$ such that, $H(s)>0 \ \forall s\in (0,s_0]$;
		\item[($A_2$)]\label{f2} there exist $C>0$ and $1<w<p$ such that $|h(s)|\leq C(1+|s|^{w-1}),\ \forall s\in \mathbb{R}$;
		\item[($A_3$)]\label{f3} there exists $q>p$ such that $$\limsup_{s\to 0}\frac{H(s)}{|s|^q}<\infty.$$
	\end{description}

	
	\begin{theorem}\label{alkalmazas} 
		Let $(M,F)$ be a $d$-dimensional Randers space endowed with the Finsler metric \eqref{Randers_metric} such that $\sup_{x\in M}\|\beta\|_g(x)<1$ and $g$ is a Riemannian metric where $(M,g)$ is a Hadamard manifold with sectional curvature bounded above by $-\kappa^2$, $\kappa>0$. Let $G$ be a compact connected subgroup of $\mathrm{Isom}_F(M)$ such that $\mathrm{Fix}_M(G) = \{x_0\}$ for some $x_0\in M$. Let $h:\mathbb{R} \to \mathbb{R}$ be a continuous function verifying \hyperref[f1]{$(A_1)$} -- \hyperref[f3]{$(A_3)$}, and $\alpha\in L^1(M) \cap L^\infty(M)$ be a non-zero, non-negative function which depends on $d_F(x_0, \cdot)$ and  satisfies $\displaystyle \sup_{R>0}\underset{d_{F}(x_{0},x)\leq R}{\mathrm{essinf}}\alpha(x) > 0.$ Then there exists an open interval $\Lambda \subset [0,\lambda^*]$ and a number $\mu > 0$ such that for every $\lambda\in \Lambda$, problem \eqref{feladat} admits at least three solutions in $W^{1,p}_{F,G}(M)$ having $W^{1,p}_{F}(M)$-norms less than $\mu$.
	\end{theorem}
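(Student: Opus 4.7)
The plan is to cast \eqref{feladat} variationally on the $G$-invariant subspace $W^{1,p}_{F,G}(M)$ and apply Ricceri's three-critical-points theorem to the associated energy functional
$$\mathscr{E}_\lambda(u) = \Phi(u) - \lambda \Psi(u), \qquad \Phi(u) = \tfrac{1}{p}\|u\|_{W^{1,p}_F}^p, \qquad \Psi(u) = \int_M \alpha(x)\, H(u)\,\mathrm{d}v_g.$$
Since $p > d$, Theorem \ref{Finsler} furnishes the continuous Morrey-type embedding $W^{1,p}_F(M) \hookrightarrow L^\infty(M)$, and combined with the subcritical growth $(A_2)$, $\mathscr{E}_\lambda$ is a well-defined $C^1$ functional whose critical points are weak solutions of \eqref{feladat}. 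Because $\alpha$ depends only on $d_F(x_0,\cdot)$ and every $\xi \in G$ is an $F$-isometry fixing $x_0$, the functional $\mathscr{E}_\lambda$ is $G$-invariant; Palais' principle of symmetric criticality (applicable since $G$ is a compact Lie group acting isometrically) guarantees that critical points of $\mathscr{E}_\lambda|_{W^{1,p}_{F,G}(M)}$ are genuine weak solutions of \eqref{feladat} in $W^{1,p}_F(M)$.

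To secure the compactness required by Ricceri's theorem, I would invoke Theorem \ref{mainThm:Riemann1}: since $(M,g)$ is Hadamard and $\mathrm{Fix}_M(G) = \{x_0\}$, the Riemannian expansion condition $(\mathbf{EC})_G$ holds. The bound $\sup_M \|\beta\|_g < 1$ makes the reversibility constant of $F$ finite, so $d_g$ and $d_F$ are bi-Lipschitz equivalent; this transfers the expansion to the Finslerian setting, i.e., $m_F(y,\rho) \to \infty$ as $d_F(x_0,y) \to \infty$. Applying Theorem \ref{Finsler} in the Morrey case \textbf{(M)} then yields the compact embedding $W^{1,p}_{F,G}(M) \hookrightarrow L^\infty(M)$, whence $\Psi|_{W^{1,p}_{F,G}(M)}$ is sequentially weakly continuous, while $\Phi$ is coercive and weakly lower semicontinuous by construction.

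The central analytic step is verifying the geometric hypothesis of Ricceri's theorem: the existence of $r>0$ and $\bar u \in W^{1,p}_{F,G}(M)$ with $\Phi(\bar u) > r$ and
$$\frac{\sup_{\Phi(u) \leq r} \Psi(u)}{r} \ < \ \frac{\Psi(\bar u)}{\Phi(\bar u)}.$$
For $\bar u$ I would take a $G$-invariant plateau function, equal to $s \in (0, s_0]$ on $B_F(x_0, r_1)$ and decaying linearly to zero on the annulus $B_F(x_0, r_2) \setminus B_F(x_0, r_1)$; $G$-invariance follows since the function depends only on $d_F(x_0, \cdot)$ and $x_0$ is fixed by $G$. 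Hypothesis $(A_1)$ together with $\mathrm{essinf}_{B_F(x_0,R)}\alpha > 0$ gives the lower bound $\Psi(\bar u) \geq H(s)\int_{B_F(x_0,r_1)}\alpha\,\mathrm{d}v_g > 0$, and the curvature bound $\sec_g \leq -\kappa^2$ permits explicit hyperbolic-type volume estimates on the annulus that bound $\Phi(\bar u)$ from above. For the left-hand side, the Morrey embedding yields $\|u\|_\infty \leq C_\infty \Phi(u)^{1/p}$, so for sufficiently small $r$ the $L^\infty$-norm on $\{\Phi \leq r\}$ falls into the regime where $(A_3)$ applies and delivers $|H(u)| \leq c|u|^q$ pointwise; integrating against $\alpha \in L^1(M)$ gives $\sup_{\Phi(u) \leq r} \Psi(u) \leq c' r^{q/p}$, which, since $q > p$, is $o(r)$ as $r \to 0^+$.

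The main obstacle is the quantitative calibration of the parameters $s, r_1, r_2, r$: one must arrange $\Phi(\bar u) > r$ while keeping the ratio $\Psi(\bar u)/\Phi(\bar u)$ bounded below by a positive constant, and simultaneously force $\sup_{\Phi(u)\leq r}\Psi(u)/r$ strictly below that constant. The explicit hyperbolic volume estimate provided by $\sec_g \leq -\kappa^2$ is precisely what makes this calibration feasible at small scales, since it gives effective control of $\Phi(\bar u)$ in terms of $s$ and the annulus widths. Once the Ricceri inequality is established, his theorem directly produces an open interval $\Lambda \subset [0,\lambda^*]$ and a uniform constant $\mu > 0$ such that, for every $\lambda \in \Lambda$, $\mathscr{E}_\lambda|_{W^{1,p}_{F,G}(M)}$ admits at least three critical points of $W^{1,p}_F$-norm less than $\mu$; by symmetric criticality these are three distinct weak solutions of \eqref{feladat} with the claimed norm bound.
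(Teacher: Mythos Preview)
Your overall strategy---Ricceri/Bonanno three-critical-points, symmetric criticality, a $G$-invariant plateau test function, and the compact Morrey embedding from Theorem~\ref{Finsler}---matches the paper's approach. However, there is a genuine gap in your variational setup and, as a consequence, in your understanding of where the curvature hypothesis enters.

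You take $\Phi(u)=\tfrac{1}{p}\|u\|_{W^{1,p}_F}^p$, the full Sobolev norm. The Euler--Lagrange equation of $\Phi-\lambda\Psi$ is then $-\boldsymbol{\Delta}_{F,p}u+|u|^{p-2}u=\lambda\alpha(x)h(u)$, which is \emph{not} problem \eqref{feladat}. The paper instead sets $\Phi(u)=\tfrac{1}{p}\int_M F^{*p}(x,Du)\,\mathrm{d}V_F$, the gradient term only, so that critical points solve the correct equation. But with this choice, coercivity of $\Phi-\lambda J$ is no longer automatic: one needs $\int_M F^{*p}(x,Du)\,\mathrm{d}V_F$ to dominate the full $W^{1,p}$-norm. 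This is exactly where the hypothesis $\mathbf{K}\leq -\kappa^2$ is used---via a McKean-type spectral gap $\lambda_{1,g}\geq \bigl((d-1)\kappa/p\bigr)^p$, which yields $\int_M|\nabla_g u|^p\,\mathrm{d}v_g\geq c\,\|u\|_{W^{1,p}_g}^p$ and hence (after the Randers--Riemann comparison) coercivity of $\mathcal{E}_\lambda$ for all $\lambda\geq 0$.

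Your claim that the curvature bound ``permits explicit hyperbolic-type volume estimates on the annulus that bound $\Phi(\bar u)$ from above'' is therefore misplaced: $\mathbf{K}\leq -\kappa^2$ gives \emph{lower} volume bounds (via Bishop--Gromov/Rauch comparison), not upper ones, and the paper bounds $\Phi(\bar u)$ simply by $\bigl(\tfrac{s_0}{R-r}\bigr)^p r_F^p\,\mathrm{Vol}_F(B_F(x_0,R))$, which is finite for any complete manifold without any curvature assumption. The calibration of $\rho_0$ versus $\Phi(\bar u)$ and $J(\bar u)$ proceeds exactly as you describe, using $(A_3)$ to get $\sup_{\Phi\leq\rho}J=O(\rho^{q/p})=o(\rho)$; no quantitative volume control is needed there. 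Finally, the paper also verifies the Palais--Smale condition for $\mathcal{E}_\lambda$ explicitly (using the uniformity constant $l_F>0$ of the Randers metric), which Bonanno's theorem requires and which you do not address.
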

	
	The proof of Theorem \ref{alkalmazas} is based on the compact embedding from Theorem \ref{Finsler} combined with variational arguments.

	The organization of the paper is the following. After presenting some preliminary results in Riemannian geometry (see Section \ref{sec:Riemann}), Sections \ref{3-sect} and \ref{4-sect} are devoted to the proof of Theorems \ref{mainThm:Riemann1} \& \ref{mainThm:Riemann2}, respectively. Section \ref{sec:Finsler} contains preliminaries on Randers spaces and the proof of Theorem \ref{Finsler}, together with Example  \ref{FinslerPelda}, emphasizing the sharpness of Theorem \ref{Finsler}. Finally, in the last part of Section \ref{sec:Finsler}, we present  the proof of Theorem \ref{alkalmazas}.

	\section{Preliminaries} \label{sec:Riemann}

	Let $(M,g)$ be a complete non-compact Riemannian manifold with $\mathrm{dim}M=d$.  
	Let $T_xM$ be the tangent space at $x \in M$, $\displaystyle TM =
	\bigcup_{x\in M}T_xM$ be the tangent bundle, and $d_g : M \times M
	\to [0, +\infty)$ be the distance function associated to the
	Riemannian metric $g$. Let $B_g(x, \rho) = \{y \in M : d_g(x, y) <
	\rho \}$ be the open metric ball with center $x$ and radius $\rho> 0$; if $dv_g$ is the canonical volume element on $(M, g)$, the
	volume of a bounded open set $\Omega \subset M$ is 
	$\mathrm{Vol}_g(\Omega) = \displaystyle\int_{\Omega} {\rm d}v_{g}= \mathcal H^d(\Omega)$.
	If ${\text d}\sigma_g$ denotes
	the $(d-1)$-dimensional Riemannian measure induced on $\partial \Omega$
	by $g$, then
	$$\mathrm{Area}_g(\partial \Omega)=\displaystyle\int_{\partial \Omega} {\text d}\sigma_g=\mathcal H^{d-1}(\partial \Omega)$$ stands for the area of $\partial \Omega$ with
	respect to the metric $g$. Hereafter, $\mathcal H^l$ denotes the $l$-dimensional Hausdorff measure. 
	
	Let $p>1.$ The norm of $L^p(M)$ is given by
	$$\|u\|_{L^p(M)}=\left(\displaystyle\int_M
	|u|^p{\rm d}v_{g}\right)^{1/p}.$$ Let $u:M\to \mathbb R$ be a function of
	class $C^1.$ If $(x^i)$ denotes the local coordinate system on a
	coordinate neighbourhood of $x\in M$, and the local components of
	the differential of $u$ are denoted by 
	$u_i=\frac{\partial	u}{\partial x_i}$, then the local components of the gradient  $\nabla_g u$ are
	$u^i=g^{ij}u_j$. Here, $g^{ij}$ are the local components of
	$g^{-1}=(g_{ij})^{-1}$. In particular, for every $x_0\in M$ one has
	the eikonal equation
	\begin{equation}\label{dist-gradient}
		|\nabla_g d_g(x_0,\cdot)|=1\ {\rm a.e. \ on}\ M.
	\end{equation}
	
	When no confusion arises, if $X,Y\in T_x M$, we simply write $|X|$ and
	$\langle X,Y\rangle$ instead of the norm $|X|_x$ and inner product
	$g_x(X,Y)=\langle X,Y\rangle_x$, respectively. 
	
	The $L^p(M)$ norm of $\nabla_g u: M \to TM$ is given by
	$$\|\nabla_g u\|_{L^p(M)}=\left(\displaystyle\int_M |\nabla_gu|^p{\rm d}v_g\right)^\frac{1}{p}.$$  
	The space $W^{1,p}_g(M)$ is the completion of $C_0^\infty(M)$ with respect to the norm
	$$\|u\|^p_{W^{1,p}_g(M)}={\|u\|_{L^p(M)}^p+\|\nabla_g u\|_{L^p(M)}^p}.$$
	
	For any $c\leq 0$, let
	$$V_{c,d}(\rho) = d \omega_d \int_0^\rho {\bf s}_c(t)^{d-1}{\rm d}t$$
	be the volume of the ball with radius $\rho>0$ in the $d$-dimensional
	space form (i.e., either the hyperbolic space with sectional
	curvature $c$ when $c<0$, or the Euclidean space when $c=0$), where
	$${\bf s}_{c}(t)=\left\{
	\begin{array}{lll}
		t
		& \hbox{if} &  {c}=0, \\
		\frac{\sinh(\sqrt{-c}t)}{\sqrt{-c}} & \hbox{if} & {c}<0,
	\end{array}\right.$$
	and $\omega_d$ is the volume of the
	unit $d$-dimensional Euclidean ball. Note that for every $x\in M$,
	we have
	\begin{equation}  \label{volume-comp-nullaban}
		\lim_{\rho\to 0^+}\frac{\mathrm{Vol}_g(B_g(x,\rho))}{V_{c,d}(\rho)} = 1.
	\end{equation}
	The notation ${\bf K}\leq {c}$ means that the sectional curvature is
	bounded from above by ${c}$ at any point and  direction.
	The Bishop-Gromov volume comparison principle states that if  $(M,g)$ be a $d$-di\-men\-sional Hadamard manifold with  
	${\bf	K}\leq c\leq 0$ and $x\in M$ fixed, then the function 
	$$\rho\mapsto \frac{\mathrm{Vol}_g(B_g(x,\rho))}{V_{c,d}(\rho)}, \ \rho>0$$ 
	is non-decreasing;
	in particular, from {\rm \textrm{(\ref{volume-comp-nullaban})}} one has
	\begin{equation}  \label{volume-comp-altalanos-0}
		{\mathrm{Vol}_g(B_g(x,\rho))}\geq V_{c,d}(\rho)\ {\rm for\ all}\ \rho>0.
	\end{equation}
	If equality holds in (\ref{volume-comp-altalanos-0}) for all $x\in M$ and $\rho>0$, then $\mathbf{K}\equiv c$; for further details, see Shen \cite{ShenVolume}.

In a similar way, if the Ricci curvature of $(M,g)$ is bounded from below by $(n-1)c$ (with $c\leq0$), then 
$$\rho\mapsto \frac{\mathrm{Vol}_g(B_g(x,\rho))}{V_{c,d}(\rho)}, \ \rho>0$$ 
is non-increasing;
moreover, by {\rm \textrm{(\ref{volume-comp-nullaban})}} one has
\begin{equation}  \label{volume-comp-altalanos-1}
{\mathrm{Vol}_g(B_g(x,\rho))}\leq V_{c,d}(\rho)\ {\rm for\ all}\ \rho>0.
\end{equation}

	Let $G$ be a compact connected subgroup of $\mathrm{Isom}_g(M)$, and let $\mathcal{O}_G^x=\{\xi x: \xi \in G\}$ be the orbit of the element $x\in M$.
	The action of $G$ on $W^{1,p}_g(M)$ is defined by
	\begin{equation}\label{action-of-the-group}
		(\xi u)(x)=u(\xi^{-1}x) \ \ {\rm for\ all}\  x\in M,\ \xi \in G,\ u\in W^{1,p}_g(M),
	\end{equation}
	where
	$\xi^{-1}:M\to M$ is the inverse of the isometry $\xi$. We say that a continuous action of a group $G$ on a complete Riemannian manifold $M$ is \textit{coercive} (see Tintarev \cite[Definition 7.10.8]{TintarevKonyv} or Skrzypczak and Tintarev \cite[Definition 1.2]{SkrzypczakTintarev}) if for every $t>0$, the set 
	\[
	\mathscr{O}_{t}=\{x\in M:\:\mathrm{diam}\mathcal{O}_G^x\leq t\}
	\]
	is bounded. Let
	$$W^{1,p}_{G}(M)=\{u\in W^{1,p}_g(M): \xi u=u\
	{\rm for\ all}\ \xi \in G\}$$ 
	be the subspace of $G$-invariant functions of $W^{1,p}_g(M)$.
	
	Let $\mathscr{C}(M)$ be the space of continuous functions $u: M \to [0,\infty)$ having compact support $D \subset
	M$, where $D$ is  smooth enough, $u$ being of class
	$C^2$ in $D$ and having only non-degenerate critical points in $D$.
	Based on classical Morse theory and density arguments, in the sequel we shall consider test functions $u \in \mathscr{C}(M)$ in order to handle
	generic Sobolev inequalities.
	
	Let $u\in \mathscr{C}(M)$  and $\Omega\subset {\rm supp}(u)\subset M$ be an open set. Similarly to Druet, Hebey, and Vaugon \cite{DruetHebeyVaugon}, we may associate to the restriction of $u$ to $\Omega$, namely $u|_\Omega$, 
	its
	{\it Euclidean}  {\it rearrangement} function $u^*:B_e(0,R_\Omega)\to [0,\infty)$, which is
	radially symmetric, non-increasing in $|x|$, and for every $t\geq \inf_\Omega u$ is defined by
	\begin{equation}\label{vol-egyenloseg}
		{\rm Vol}_e(\{x\in B_e(0,R_\Omega):u^*(x)>t\})={\rm Vol}_g(\{x\in \Omega:u(x)>t\});
	\end{equation}
	here, Vol$_e$ denotes the usual $d$-dimensional Euclidean volume and $R_\Omega>0$ is chosen such that ${\rm Vol}_g(\Omega)={\rm Vol}_e(B_e(0,R_\Omega))=\omega_dR_\Omega^d$. 
	In the sequel, we state the most important properties of this rearrangement which are crucial in the proof of Theorem \ref{mainThm:Riemann1}; the proof relies on suitable application of the co-area formula combined with the weak form of the isoperimetric inequality on Hadamard manifolds (for a similar proof, see Druet, Hebey, and Vaugon \cite{DruetHebeyVaugon}, and Krist\'aly \cite{KristalyMorreyPotencial}).

	\begin{lemma}\label{PolyaSzego}
		Let $(M,g)$ be a $d(\geq 2)-$dimensional Hadamard manifold. 
		Let  $u \in \mathscr{C}(M)$ be a non-zero function, $\Omega\subset {\rm supp}(u)\subset M$ be an open set,  and $u^*:B_e(0,R_\Omega) \to [0,\infty)$ its Euclidean
		rearrangement function.  Then the following properties hold:
		\begin{itemize}
			\item[{\rm (i)}] {\rm Norm-preservation:} for every $q\in (0,\infty],$
			$\|u\|_{L^q(\Omega)}=\|u^*\|_{L^q(B_e(0,R_\Omega))};$

			\item[{\rm (ii)}] {\rm P\'olya-Szeg\H{o} inequality:} for every $p>1,$ 
			\begin{equation}\label{PSz-sharp}
				\|\nabla_g u\|_{L^p(\Omega)}\geq \frac{C(d)}{d \omega_d^\frac{1}{d}}\|\nabla u^*\|_{L^p(B_e(0,R_\Omega))},
			\end{equation}   
			where $C(d)>0$ is the Croke-constant $($see Croke \cite{Croke}$),$ i.e., $C(2)=1$ and 
			$$C(d)={(d\omega_d)^{1-\frac{1}{d}}}\left((d-1)\omega_{d-1}\int_0^\frac{\pi}{2}\cos^\frac{d}{d-2}
			(t)\sin^{d-2} (t)\,\mathrm{d}t\right)^{\frac{2}{d}-1},\ \ d\geq 3.$$
		\end{itemize}
	\end{lemma}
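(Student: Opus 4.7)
\smallskip
\noindent\textbf{Proof plan for Lemma \ref{PolyaSzego}.}

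For the norm-preservation (i), the plan is to use Cavalieri's principle. By the very definition \eqref{vol-egyenloseg}, the distribution functions $\mu(t):=\mathrm{Vol}_g(\{x\in\Omega:u(x)>t\})$ and $\mu^{*}(t):=\mathrm{Vol}_e(\{x\in B_e(0,R_\Omega):u^*(x)>t\})$ coincide for every $t\geq \inf_\Omega u$, while both super-level sets exhaust their respective domains for $t<\inf_\Omega u$. For $q\in(0,\infty)$, layer-cake integration gives $\|u\|_{L^q(\Omega)}^q=\int_0^\infty q t^{q-1}\mu(t)\,dt=\int_0^\infty q t^{q-1}\mu^*(t)\,dt=\|u^*\|_{L^q(B_e(0,R_\Omega))}^q$, while the case $q=\infty$ follows directly from $\mathrm{ess\,sup}\,u=\mathrm{ess\,sup}\,u^*$, which is again encoded in the equal distribution functions.

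For the P\'olya-Szeg\H{o} inequality (ii), I would follow the classical symmetrization scheme, with Euclidean sharpness replaced by Croke's isoperimetric inequality on Hadamard manifolds. Since $u\in\mathscr{C}(M)$ has only non-degenerate critical points on its support, almost every $t>0$ is a regular value of $u$, hence $\{u=t\}$ is a smooth hypersurface. The co-area formula yields simultaneously
\begin{equation*}
-\mu'(t)=\int_{\{u=t\}}|\nabla_g u|^{-1}\,d\sigma_g,\qquad \int_\Omega|\nabla_g u|^p\,dv_g=\int_0^\infty\!\!\left(\int_{\{u=t\}}|\nabla_g u|^{p-1}\,d\sigma_g\right)dt.
\end{equation*}
Applying H\"older with exponents $p$ and $p/(p-1)$ to the area of the level set,
\begin{equation*}
\mathrm{Area}_g(\{u=t\})\leq \left(\int_{\{u=t\}}|\nabla_g u|^{p-1}\,d\sigma_g\right)^{\!1/p}(-\mu'(t))^{(p-1)/p},
\end{equation*}
and combining it with Croke's weak isoperimetric inequality $\mathrm{Area}_g(\{u=t\})\geq C(d)\,\mu(t)^{(d-1)/d}$ valid on any Hadamard manifold, I obtain the pointwise lower bound
\begin{equation*}
\int_{\{u=t\}}|\nabla_g u|^{p-1}\,d\sigma_g\;\geq\; C(d)^{p}\,\mu(t)^{p(d-1)/d}\,(-\mu'(t))^{-(p-1)}.
\end{equation*}

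For the right-hand side of \eqref{PSz-sharp}, the radial symmetry of $u^*$ makes $|\nabla u^*|$ constant along each Euclidean sphere $\{u^*=t\}$, so a direct computation (using $\mu^*\equiv\mu$, the explicit sphere area $d\omega_d r(t)^{d-1}$ and the Euclidean isoperimetric equality $d\omega_d r(t)^{d-1}=d\omega_d^{1/d}\mu(t)^{(d-1)/d}$) gives
\begin{equation*}
\int_{B_e(0,R_\Omega)}|\nabla u^*|^p\,dx\;=\;\bigl(d\omega_d^{1/d}\bigr)^{p}\!\int_0^\infty\mu(t)^{p(d-1)/d}\,(-\mu'(t))^{-(p-1)}\,dt.
\end{equation*}
Plugging the previous lower bound into the co-area expression for $\|\nabla_g u\|_{L^p(\Omega)}^{p}$ and taking $p$-th roots yields exactly \eqref{PSz-sharp} with constant $C(d)/(d\omega_d^{1/d})$. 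The main obstacle I expect is bookkeeping at non-regular values of $u$ and at $t=\inf_\Omega u$ (where $\mu$ has a jump if $\Omega\subsetneq\mathrm{supp}\,u$): since $\mathscr{C}(M)$-functions enjoy only non-degenerate critical points, Morse-Sard arguments reduce this to a measure-zero set of $t$'s and the above identities pass through a.e.; a standard density argument then removes the $\mathscr{C}(M)$-restriction when invoked later.
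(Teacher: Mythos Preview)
Your proposal is correct and follows precisely the approach the paper indicates (but does not spell out): the paper merely states that ``the proof relies on suitable application of the co-area formula combined with the weak form of the isoperimetric inequality on Hadamard manifolds'' and refers to Druet--Hebey--Vaugon and Krist\'aly for details, which is exactly the scheme you carry out (layer-cake for (i), co-area + H\"older on level sets + Croke's inequality for (ii)). Your remark about the bookkeeping at non-regular values and at $t=\inf_\Omega u$ is apt and is handled the same way in the cited references.
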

	
		We conclude this section with the following Rellich--Kondrachov-type embedding, an expected result based on Aubin \cite[Chapter 2]{Aubin}; nevertheless, for convenience, we propose here an alternative proof which is needed both in Theorems \ref{mainThm:Riemann1} and \ref{mainThm:Riemann2}.

	\begin{lemma}\label{RK-lemma} 
		Let $(M,g)$ be a $d$-dimensional complete Riemannian manifold. If $R>0$, then the embedding $W^{1,p}_g(B_g(y,R))\embd L^q({B_g(y,R)})$ is compact for every $y\in M$ and every $d$-admissible pair $(p,q)$.
	\end{lemma}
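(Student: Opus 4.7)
My plan is to reduce the statement to the classical Rellich--Kondrachov theorem on bounded Euclidean domains via a finite atlas together with a partition of unity. Since $(M,g)$ is complete, the Hopf--Rinow theorem ensures that $\overline{B_g(y,R)}$ is compact. Hence it can be covered by finitely many coordinate charts $(U_i,\varphi_i)_{i=1}^N$, where each $\varphi_i$ is a diffeomorphism from $U_i$ onto a bounded smooth domain $V_i\subset\mathbb{R}^d$, and where the local components $g_{jk}$, $g^{jk}$ of the metric and its inverse (together with $\sqrt{\det g_{jk}}$) are uniformly bounded above and below by positive constants. I fix a smooth partition of unity $\{\chi_i\}_{i=1}^N$ subordinate to $\{U_i\}$ with $\operatorname{supp}\chi_i\Subset U_i$.

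Given a bounded sequence $(u_n)\subset W^{1,p}_g(B_g(y,R))$, I decompose $u_n=\sum_{i=1}^N\chi_i u_n$. The uniform bounds on the metric tensor on each chart yield the equivalence between the intrinsic Sobolev norm on $U_i$ and the standard Euclidean Sobolev norm of the push-forward, so that $v_n^{(i)}:=(\chi_i u_n)\circ\varphi_i^{-1}$ is bounded in $W^{1,p}(V_i)$. Moreover, since $\operatorname{supp}\chi_i\Subset U_i$, each $v_n^{(i)}$ has compact support inside $V_i$, which eliminates any boundary-regularity issue when invoking the Euclidean embeddings.

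In cases (\textbf{S}) and (\textbf{MT}), the classical Rellich--Kondrachov theorem furnishes a subsequence of $v_n^{(i)}$ converging in $L^q(V_i)$; a diagonal extraction over $i=1,\dots,N$, followed by pull-back through $\varphi_i$, produces a subsequence of $(u_n)$ converging in $L^q(B_g(y,R))$. In case (\textbf{M}), Morrey's embedding $W^{1,p}(V_i)\hookrightarrow C^{0,1-d/p}(\overline{V_i})$ combined with the Arzelà--Ascoli theorem yields a uniformly convergent subsequence, which translates to convergence in $L^\infty(B_g(y,R))$ after summing via the partition of unity. The main technical point is the uniform equivalence between the intrinsic and Euclidean Sobolev norms on each chart, together with the bookkeeping needed to combine the pieces coherently; once these ingredients are in place, all three admissible cases follow in parallel from the corresponding classical Euclidean results, so I do not expect any deeper conceptual obstruction.
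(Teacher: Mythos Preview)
Your argument is correct and follows the same overall strategy as the paper: use Hopf--Rinow to obtain compactness of $\overline{B_g(y,R)}$, cover by finitely many coordinate patches on which the Riemannian and Euclidean Sobolev norms are equivalent, and invoke the classical Euclidean embeddings (Rellich--Kondrachov for (\textbf{S}) and (\textbf{MT}), Morrey plus Arzel\`a--Ascoli for (\textbf{M})).

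The main technical difference is that the paper works with \emph{harmonic} coordinate charts (via Hebey's theorem) and restricts $u_n$ directly to each patch $U_{z_j}$, then applies the Euclidean Sobolev inequality on the image domains $\Omega_{z_j}=\varphi_{z_j}(U_{z_j})$ and patches the estimates together by a Cauchy-sequence argument. You instead use a partition of unity $\{\chi_i\}$ and push forward the compactly supported pieces $\chi_i u_n$. Your route is slightly more robust: because each $v_n^{(i)}$ has compact support in $V_i$, you can extend by zero and sidestep any question about the regularity of $\partial\Omega_{z_j}$ when calling on the Euclidean embeddings. The paper's choice of harmonic charts, on the other hand, delivers the explicit $(1+\varepsilon)$-comparison of metrics that it reuses later for the uniform estimates in Theorems~\ref{mainThm:Riemann1} and~\ref{mainThm:Riemann2}; for the present lemma alone that precision is not needed, and your generic-chart argument suffices.
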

	\begin{proof} 
		Since $\overline{B_g(y,R)} \subset M$ is compact (due to Hopf-Rinow theorem), the Ricci curvature is bounded from below, see Bishop and Crittenden \cite[p. 166]{BishopCrittenden} and the injectivity radius is positive on $\overline{B_g(y,R)}$, see Klingenberg \cite[Proposition 2.1.10]{Klingenberg} or Bao, Chern, and Shen \cite[Chapter 8]{BaoChernShen}.  
		
		Thus, we are in the position to use Hebey \cite[Theorem 1.2]{Hebey-konyv1}; therefore, for every $\varepsilon>0$ there exists a harmonic radius $r_H>0$, such that for every $z\in \overline{B_g(y,R)}$ one can find a harmonic coordinate chart $\varphi_z: B_g(z,r_H)\to \mathbb{R}^d$ such that $\varphi_z(z)=0$ and the components $(g_{jl})$ of $g$ in this chart satisfy 
		\begin{equation}\label{csak-egy-y-ra}
		\displaystyle \frac{1}{1+\varepsilon}\delta_{jl}\le g_{jl}\le (1+\varepsilon) \delta_{jl}
		\end{equation} as bilinear forms. 
		Therefore, it follows that 
		\begin{equation}\label{egyenletolenseg}
		\frac{1}{\sqrt{1+\varepsilon}}d_g(z,x) \le |\varphi_z(x)| \leq \sqrt{1+\varepsilon}d_g(z,x), \ \text{for all } x \in B_g(z, r_H).
		\end{equation}
		
		Now let $0 < \rho < r_H$. Since $\overline{B_{g}(y,R)}$ is compact, there exists $L\in \mathbb{N}$ and $z_1,\dots,z_L \in \overline{B_{g}(y,R)}$ such that 
		$\displaystyle \overline{B_g(y,R)} \subseteq \bigcup_{j=1}^L B_g(z_j,\rho)$.
		For every $z_j \in \overline{B(y,R)}, j=\overline{1,L}$, denote by 
		$$U_{z_j} \coloneqq B_g(z_j,\rho) \cap B_g(y,R) \ \text{ and } \
		\Omega_{z_j} \coloneqq \varphi_{z_j}\left( U_{z_j} \right) \subset \mathbb{R}^d ,$$ 
		thus $\left\{ U_{z_j} \right \}_{j=\overline{1,L}}$ is a finite covering of $B_g(y,R)$.
		
		First observe that for any $j \in \{1,\dots,L\}$ and  $u\in W_g^{1,p}(B_g(y,R))$, on account of \eqref{egyenletolenseg}, we have that 
		\begin{equation}\label{egyik1}
		\int_{U_{z_j}} |\nabla_{g}u|^p+|u|^p {\rm d}v_{g} \geq 
		\left(\frac{1}{\sqrt{1+\varepsilon}}\right)^{d+p}\left(\int_{\Omega_{z_j}} 
		|\nabla(u\circ \varphi_{z_j}^{-1})|^p+|u\circ \varphi_{z_j}^{-1}|^p \mathrm{d}x\right).
		\end{equation}

		We first focus on the (\textbf{S}) admissible case. Observe that 	\begin{equation}\label{masik}
			\int_{U_{z_j}}|u|^q{\rm d}v_{g} \le (1+\varepsilon)^\frac{d}{2}\int_{\Omega_{z_j}}|u\circ \varphi_{z_j}^{-1}|^q\mathrm{d}x.
		\end{equation}
	
Now, by the  euclidean Sobolev inequality (see Brezis \cite[Corollary 9.14]{Brezis}), for every $j\in \{1,\dots,L\}$ there exists a constant $C_{S,j}$ such that  \begin{equation}\label{eucSobS}\left(\int_{\Omega_{z_j}}|u\circ \varphi_{z_j}^{-1}|^q\mathrm{d}x\right)^\frac{1}{q}\leq C_{S,j} \left(\int_{\Omega_{z_j}} |\nabla(u\circ \varphi_{z_j}^{-1})|^p+|u\circ \varphi_{z_j}^{-1}|^p\mathrm{d}x\right)^{\frac{1}{p}}.
\end{equation}
	Therefore, by \eqref{egyik1}, \eqref{masik} and \eqref{eucSobS} we have that 
	\begin{align}
		\|u\|_{L^q(B_g(y,R))} &\leq \sum_{j=1}^{L} \|u\|_{L^q(U_{z_j})}\leq (1+\varepsilon)^{\frac{d}{2q}}\sum_{j=1}^{L}\|u\circ \varphi^{-1}_{z_j}\|_{L^q(\Omega_{z_j})}\nonumber\\ &\leq (1+\varepsilon)^{\frac{d}{2q}}\sum_{j=1}^{L} C_{S,j}\|u\circ \varphi^{-1}_{z_j}\|_{W^{1,p}(\Omega_{z_j})} \nonumber \leq (1+\varepsilon)^{\frac{dp+dq+pq}{2pq}}\sum_{j=1}^{L}C_{S,j} \|u\|_{W^{1,p}_g(U_{z_j})}\nonumber \\&\leq (1+\varepsilon)^{\frac{dp+dq+pq}{2pq}}\sum_{j=1}^{L} C_{S,j}\cdot \|u\|_{W^{1,p}_g(B_g(y,R))},\label{SSobLocal}
	\end{align}
which proves the validity of the continuous Sobolev embedding $W_{g}^{1,p}(B_{g}(y,R))\hookrightarrow L^{q}(B_{g}(y,R))$ in the $(\textbf{S})$ case. 
Now we prove that the previous embedding is compact. To do this, let $\{u_n\}_n$ be a bounded sequence in $W^{1,p}_g(B_g(y,R))$, and denote $\tilde{u}_n^j=u_n|_{U_{z_j}}$ for every $j \in \{1, \dots , L\}$. Using \eqref{egyik1}, we have that for every $j$, the sequence $\tilde{u}_n^j=u_n\circ \varphi_{z_j}^{-1}$ is bounded in $W^{1,p}(\Omega_{z_j})$.  By the Rellich-Kondrachov theorem  one gets that there exists a subsequence of $\{\tilde{u}_n^j\}_n$ which is a Cauchy sequence in $L^q(\Omega_{z_j})$. Let $\{u_m\}_m$ be a subsequence of $\{u_n\}_n$ such that for any $j$, $\{\tilde{u}_m^j\}_m$ is a Cauchy sequence in $L^q(\Omega_{z_j})$. Thus, applying \eqref{masik}, for any $m_1,m_2$ we have that $$\|u_{m_1}-u_{m_2}\|_{L^q(B_g(y,R))}\leq \sum_{j=1}^{L}\|u^j_{m_1}-u^j_{m_2}\|_{L^q(U_{z_j})}\leq(1+\varepsilon)^{\frac{d}{2q}} \sum_{j=1}^{L}\|\tilde{u}_{m_1}^j-\tilde{u}_{m_2}^j\|_{L^q(\Omega_{z_j})},$$ hence $\{u_m\}_m$ is a Cauchy sequence in $L^q(B_g(y,R))$, which proves the claim.

One can prove the (\textbf{MT}) admissible case analogously, replacing \eqref{eucSobS} with the euclidean Sobolev inequality when $p = d$.

	Finally, in the (\textbf{M}) case, we have that	
		
		\begin{equation}\label{supNorm1}
			\sup_{x\in\overline{B_{g}(y,R)}}|u(x)|=\max_{j=\overline{1,L}}\|u\|_{C^0(\overline{U_{z_j}})} = 
			\max_{j=\overline{1,L}}\|u\circ \varphi_{z_j}^{-1}\|_{C^0(\overline{\Omega_{z_j}})}.
		\end{equation}
		Again, by Brezis \cite[Corollary 9.14]{Brezis}, for each $j\in \{1,\dots,L\}$ there exists a constant $C_{0,j}$ such that 
		$$\|u\circ \varphi_{z_j}^{-1}\|_{C^0(\overline{\Omega_{z_j}})} \leq C_{0,j} \cdot \|u\circ \varphi_{z_j}^{-1}\|_{W^{1,p}(\Omega_{z_j})},$$
		thus this inequality together with \eqref{egyik1} and \eqref{supNorm1} yields that
		\begin{align}
			\sup_{x\in\overline{B_{g}(y,R)}}|u(x)|&\leq \max_{j=\overline{1,L}} C_{0,j} \|u\circ \varphi_{z_j}^{-1}\|_{W^{1,p}(\Omega_{z_j})} \nonumber \\
			&\leq \max_{j=\overline{1,L}}C_{0,j} (1+\varepsilon)^\frac{d+p}{2p} \|u\|_{W_g^{1,p}(U_{z_j})} \nonumber \\
			&\leq \max_{j=\overline{1,L}}C_{0,j} \cdot (1+\varepsilon)^\frac{d+p}{2p} \|u\|_{W_g^{1,p}(B_g(y,R))},\label{supNorm2}
		\end{align}
		which proves again that the continuous embedding holds.  
		Now we prove that this injection is compact. To do this, consider a bounded set $A\subset W_g^{1,p}(B_g(y,R))$ , i.e. there exists $M>0$ such that 
		$\|u\|^p_{W_g^{1,p}(B_g(y,R))}\leq M \ \text{ for all } u \in A.$ 
		From the previous inequality and \eqref{supNorm2} it follows that there exists $C_2>0$ such that $\|u\|_{C^0(\overline{B_g(y,R)})}\leq M C_2$ for all $u \in A.$  Thus by Ascoli's Theorem (see  Aubin \cite[Theorem 3.15]{Aubin}), we get that $A$ is precompact in $C^0(\overline{B_g(y,R)})$, which concludes the proof.	
\end{proof}

	\section{Proof of Theorem \ref{mainThm:Riemann1}}\label{3-sect}

 $\underline{({\rm i}) \Leftrightarrow ({\rm ii})}$ This equivalence can be found in Skrzypczak and Tintarev \cite[Proposition 3.1]{SkrzypczakTintarev}.	

$\underline{({\rm ii})\Rightarrow({\rm iii})}$  Without loss of any generality, it is enough to prove that $m(\gamma(t),\rho)\to \infty$ as $t\to \infty$ for every unit speed geodesic $\gamma:[0,\infty)\to M$ emanating from $x_0=\gamma(0),$ i.e., $\gamma(t)=\exp_{x_0}(ty)$ for some $y\in T_{x_0}M$ with $|y|_{g_{x_0}}=1,$ where $g_{x_0}$ and $|\cdot|_{g_{x_0}}$ denote  the inner product and norm on $T_{x_0}M$ induced by the metric $g$. 

We notice that $\mathcal{O}_G^{\gamma(t)}$ contains infinitely many elements for every $t>0$. Indeed,  $\mathcal{O}_G^{\gamma(t)}$ is a connected submanifold of $M$ whose dimension is at least 1; if its dimension would be 0 for some $t_0>0$, by connectedness,  $\mathcal{O}_G^{\gamma(t_0)}$ would be a singleton, i.e.,  $$\gamma(t_0)\in \mathrm{Fix}_M(G)=\{x_0\}=\{\gamma(0)\},$$ which is a contradiction. 
Therefore, card$\mathcal{O}_G^{\gamma(t)}=+\infty$ for every $t>0$.

If for a fixed $t_0>0$, we choose different elements $\xi_i\in G$, $i\in \mathbb N$ such that $\xi_i \gamma(t_0) \in \mathcal{O}_G^{\gamma(t_0)}$, then we also have $(\xi_i\circ \gamma)(t)=\xi_i \gamma(t) \in \mathcal{O}_G^{\gamma(t)}$ for every $i\in \mathbb N$ and $t>0$; the latter statement immediately follows from the fact that $\xi_i\in G$, $i\in \mathbb N$ are isometries, thus $t\mapsto (\xi_i\circ \gamma)(t)$, are also geodesics of unit speed emanating from $x_0.$  

Let us transplant the geodesic balls $B_g(\xi_i \gamma(t),\rho)\subset M$, $i\in \mathbb N$, into the tangent space $T_{x_0}M$ by the exponential map $\exp_{x_0}$, i.e., $\exp_{x_0}^{-1}(B_g(\xi_i \gamma(t) ,\rho))\subset T_{x_0}M$, $i\in \mathbb N$.

We claim that 
\begin{equation}\label{hasonlitas}
	\exp_{x_0}^{-1}(B_g(\xi_i \gamma(t),\rho))\subset B^{x_0}_\rho(\exp_{x_0}^{-1}(\xi_i \gamma(t))) \eqqcolon B_i^t(\rho),\ i\in \mathbb N,
\end{equation}
where $B^{x_0}_\rho(v)=\{z\in T_{x_0}M:|v-z|_{g_{x_0}}<\rho\}\subset T_{x_0}M$ for any $v\in T_{x_0}M.$ 

To see this, let $i\in \mathbb N$ and $t \in [0, \infty)$ be arbitrarily fixed. Take an element  $z\in \exp_{x_0}^{-1}(B_g(\xi_i \gamma(t),\rho))$, thus $\tilde z \coloneqq \exp_{x_0}(z)\in B_g(\xi_i \gamma(t) ,\rho)$. If $z = \exp_{x_0}^{-1}(\xi_i \gamma(t))$, we have nothing to prove. Otherwise, consider the geodesic triangle uniquely determined by the points $x_0$, $\xi_i \gamma(t)$ and $\tilde z$, respectively. Since $(M,g)$ is a Hadamard manifold, the Rauch comparison principle (see e.g. do Carmo \cite[Proposition 2.5, p. 218]{doCarmo}) implies that 
$$|\exp_{x_0}^{-1}(\xi_i \gamma(t))- z|_{g_{x_0}}=|\exp_{x_0}^{-1}(\xi_i \gamma(t))-\exp_{x_0}^{-1}(\tilde z)|_{g_{x_0}}\leq d_g(\xi_i \gamma(t),\tilde z)<\rho,$$
which concludes the proof of \eqref{hasonlitas}. 

Since the geodesics $\xi_i \circ \gamma$ are mutually different for any $i\in \mathbb N$,  the angle between any two vectors  \mbox{$\exp_{x_0}^{-1}(\xi_i \gamma(t))\subset T_{x_0}M$} are positive and it does \textit{not} depend on the value of $t>0$. Let $\alpha_{ij}\in (0,\pi]$ be the angle between $v_i \coloneqq \exp_{x_0}^{-1}(\xi_i \gamma(t))$ and $v_j \coloneqq \exp_{x_0}^{-1}(\xi_j \gamma(t))$, $i\neq j$.

Geometrically, the semilines $\tau\mapsto \tau v_i\subset T_{x_0}M$, $\tau>0$, move away in $T_{x_0}M$ from each other, independently of $t>0.$ Accordingly, 
it turns out that larger values of $t>0$ imply more mutually disjoint balls of the form $B_i^t(\rho)$. More precisely, if we define  
$$\tilde m(t,\rho)=\sup\left\{n\in \mathbb{N}: B_k^t(\rho)\cap B_l^t(\rho)=\emptyset,\forall k\neq l\ {\rm with}\ k,l\in \{1,\dots,n\}\right\},$$
we claim that $\tilde m(t,\rho)\to \infty$ as $t\to \infty$. To prove this, for every $n\geq 2$,  let 
$$t_n \coloneqq \max\left\{\frac{\rho}{\sin \left(\frac{\alpha_{ij}}{2} \right)}:i,j\in \{1,\dots,n\},i\neq j\right\}.$$
Let $t_1=0$. By the latter definition, it turns out that $\tilde m(t,\rho)\geq n$ whenever $t\geq t_n.$ Let us observe that the sequence $\{t_{n}\}_n$ is non-decreasing and $\displaystyle \lim_{n\to \infty}t_n=+\infty$. The former statement is trivial, while the limit follows from the fact that the sequence of  $w_i:=\frac{v_i}{|v_i|_{g_{x_0}}}$, $i\in \mathbb N$ (belonging to the unit sphere of $T_{x_0}M$ with center $0\in T_{x_0}M$) has a convergent subsequence, say $\{w_{i_k}\}_k$; in particular, the sequence of angles $\{\alpha_{i_ki_{k+1}}\}_k$ converges to 0, which implies the validity of the required limit. 

Now, let $\{t_{n_k}\}_k$ be a strictly increasing subsequence of $\{t_n\}_n$ with $t_{n_1}=t_1=0$, and let $f:[0,\infty)\to [0,\infty)$ be defined by 
$$f(s)=t_{n_k}+(s-k)(t_{n_{k+1}}-t_{n_k}),$$ 
for every $s\in [k,k+1)$, $k\in \mathbb N$. It is clear that $f$ is strictly increasing and  $\displaystyle \lim_{s\to \infty}f^{-1}(s)=+\infty.$
By the above construction, for every $t>0$, there exists a unique $k\in \mathbb N$ such that $t_{n_k}\leq t<t_{n_{k+1}}$. 

In particular, it follows that  $k=f^{-1}(t_{n_k})\leq f^{-1}(t)<f^{-1}(t_{n_{k+1}})=k+1$, thus
$$f^{-1}(t)-1 < k \leq n_k \leq \tilde m(t,\rho).$$
The above relation immediately implies that $\tilde m(t,\rho)\to \infty$ as $t\to \infty$.

On the other hand, by (\ref{hasonlitas}) and the fact that $\exp_{x_0}$ is a diffeomorphism, it turns out that 
$$B_g(\xi_i \gamma(t),\rho)\cap B_g(\xi_j \gamma(t),\rho)=\emptyset,\ \forall i\neq j\ {\rm with}\ i,j\in \{1,\dots,\tilde m(t,\rho)\}.$$
Therefore, we have that 
\begin{equation}\label{m-tilde-m}
	m(\gamma(t),\rho)\geq \tilde m(t,\rho),
\end{equation} and the aforementioned limit concludes the proof.

 $\underline{({\rm iii})\Rightarrow ({\rm ii})}$ Let us assume that the set $\mathrm{Fix}_G(M)$ is not a singleton, i.e. there exists $x_0,x_1\in \mathrm{Fix}_G(M)$ such that $\delta:=d_g(x_0,x_1)>0$. Since $M$ is a Hadamard manifold, there exists a unique minimal geodesic   $\gamma:\mathbb R\to M$, parametrized by arc-length, and passing throughout the points $x_0$ and $x_1$.  Let $x_2\in {\rm Im}\gamma\setminus\{x_0\}$ be such that $d_g(x_1,x_2)=\delta$ and  $t_0<t_1<t_2$ with $x_i=\gamma(t_i)$, $i\in \{0,1,2\}$.  Fix an arbitrary element $\xi \in G$; in particular, $t\mapsto \widetilde{\gamma}(t) \coloneqq (\xi \circ \gamma)(t)$ is also a geodesic. 

It is clear that $\widetilde{\gamma}(t_2)=\xi x_2$ and due to the fact that  $x_0,\,x_1\in \mathrm{Fix}_G(M)$, it turns out that $\widetilde{\gamma}(t_i)=\xi x_i=x_i$, $i\in \{0,1\}$. Therefore, by the uniqueness of the geodesic between $x_0$ and $x_1$, it follows that $\tilde \gamma(t)=\gamma(t)$ for every $t\in [t_0,t_1]$. Since Riemannian manifolds are non-branching spaces, it follows in fact that  $\widetilde\gamma \equiv {\gamma}$, thus $\xi x_2=x_2$; by the arbitrariness of $\xi\in G$ we obtain that $x_2\in \mathrm{Fix}_G(M)$ and $d_g(x_0,x_2)=d_g(x_0,x_1)+d_g(x_1,x_2)=2\delta$. By repeating this argument, one can construct a sequence of point $\{x_n\}_n\subset M$ such that $x_n\in \mathrm{Fix}_G(M)$ and $d_g(x_0,x_n)=n\delta$, $n\in \mathbb N$. In particular, $d_g(x_0,x_n)\to \infty$ as $n\to \infty$ and since $x_n\in \mathrm{Fix}_G(M)$ for every $n\in \mathbb N$, it follows that  $m(x_n,\rho)=1$, which is a contradiction.

\underline{$({\rm ii}) \Rightarrow$ compact embeddings.} First of all, the compactness of  embeddings $W_{G}^{1,p}(M)\hookrightarrow L^q(M)$  in the admissible cases  (\textbf{S}) and (\textbf{MT}) follow by Skrzypczak and Tintarev \cite{SkrzypczakTintarev}. It remains to consider the  admissible case (\textbf{M}), i.e. to prove the compactness of $W_{G}^{1,p}(M)\hookrightarrow L^\infty(M)$ whenever $p>d.$  

To complete this, we first claim that for every $\rho>0$ fixed, one has 
\begin{equation}\label{M-elso}
{\displaystyle \inf_{y\in M}{\displaystyle S(y,\rho)^{-1}>0}},
\end{equation}
where $S(y,\rho)$ is the embedding constant defined by the embedding $W^{1,p}_g(B_g(y,\rho))\embd C^0(\overline{B_g(y,\rho)})$, see 	Lemma \ref{RK-lemma}. It is clear that $\displaystyle S(y,\rho)>0$ can be considered for non-negative and non-zero functions. 

To prove \eqref{M-elso}, for $y\in M$ arbitrarily fixed,  let $u\in W^{1,p}_g(B_{g}(y,\rho))\setminus \{0\}$ be non-negative. By Lemma \ref{PolyaSzego}/(ii) it turns out that 
\begin{equation}\label{localPSZ}
\int_{B_{g}(y,\rho)} |\nabla_{g} u|^p\,{\rm d}v_{g} \geq \frac{C(d)}{d\omega_d^\frac{1}{d}}\int_{B_{e}(0,\tilde{\rho}_{y})}|\nabla u^*|^p \, \mathrm{d}x,
\end{equation}
where $u^*:B_{e}(0,\tilde{\rho}_{y})\to [0,\infty)$ denotes the Euclidean rearrangement of $u$; in particular,  
we have 
\begin{equation}\label{eq:gombok_terfogata_egyenlo.}
\mathrm{Vol}_{g}(B_{g}(y,\rho))=\mathrm{Vol}_{e}(B_{e}(0,\tilde{\rho}_{y}))=\omega_{d}\cdot\tilde{\rho}_{y}^{d},
\end{equation}
and 
\begin{equation} \label{eq:fontos}
\sup_{x\in\overline{B_{g}(y,\rho)}}|u(x)|=\sup_{x\in\overline{B_{e}(0,\tilde{\rho}_{y})}}|u^{*}(x)|=u^*(0).
\end{equation}
On the other hand, by the Bishop-Gromov theorem (see \eqref{volume-comp-altalanos-0}) together
with (\ref{eq:gombok_terfogata_egyenlo.}), one can see that $\rho\leq\tilde{\rho}_{y}.$
Thus $B_{e}(0,\rho)\subseteq B_{e}(0,\tilde{\rho}_{y}),$ and $W^{1,p}(B_{e}(0,\tilde{\rho}_{y}))\subseteq W^{1,p}(B_{e}(0,\rho))$. Accordingly,   
\begin{align*}S(y,\rho)^{-1}&=\inf_{u\in W^{1,p}(B_g(y,\rho))}\frac{\displaystyle \left(\int_{B_g(y,\rho)}|\nabla_gu|^p{\rm d}v_{g}+\int_{B_g(y,\rho)}|u|^p{\rm d}v_{g}\right)^\frac{1}{p}}{\displaystyle \sup_{x\in \overline{B_g(y,\rho)}}|u(x)|}\\ &{\geq} \frac{C(d)}{d\omega_{d}^\frac{1}{d}} \inf_{u^*\in W^{1,p}(B_e(0,\tilde\rho_y))}\frac{{\displaystyle \left(\int_{B_{e}(0,\tilde{\rho}_{y})}|\nabla u^*|^{p}\mathrm{d}x+\int_{B_{e}(0,\tilde{\rho}_{y})}|u^*|^{p}\mathrm{d}x\right)^{\frac{1}{p}}}}{\displaystyle \sup_{x\in \overline{B_e(0,\tilde\rho_y)}}|u^*(x)|}\\  &\geq\frac{C(d)}{d\omega_{d}^\frac{1}{d}} \inf_{u^*\in W^{1,p}(B_e(0,\rho))}\frac{\|u^*\|_{W^{1,p}(B_e(0,\rho))}}{u^*(0)}=\frac{C(d)}{d\omega_{d}^\frac{1}{d}} \inf_{u^*\in W^{1,p}(B_e(0,\rho))}\frac{\|u^*\|_{W^{1,p}(B_e(0,\rho))}}{\displaystyle \sup_{x\in \overline{B_e(0,\rho)}}|u^*(x)|}>0.
\end{align*}
Since the latter value does \textit{not} depend on $y\in M$,  we conclude the proof of \eqref{M-elso}.

Now, let  $\{u_n\}_n\subset W^{1,p}_G(M)$ be a bounded sequence and $\rho>0$ be an arbitrarily fixed number. 
		Then, up to a subsequence, $u_n\weak u$ in $W^{1,p}_G(M)$. Since $G$ is a subgroup of $\mathrm{Isom}_g(M)$,  for every $\xi_1,\xi_2\in G$, by a change of variables, one has 
		$$\|u_n-u\|_{W_g^{1,p}(B_g(\xi_1y,\rho))}=\|u_n-u\|_{W_g^{1,p}(B_g(\xi_2y,\rho))}.$$ 
		Therefore, on account of the definition of $m(y,\rho)$ (see \eqref{myrho}), we have that 
		$$\|u_n-u\|_{W_g^{1,p}(B_g(y,\rho))}\leq \frac{\|u_n-u\|_{W_g^{1,p}(M)}}{m(y,\rho)}.$$ 
		By using Lemma \ref{RK-lemma} and the latter inequality, we obtain 
		$$\|u_n-u\|_{C^0(\overline{B_g(y,\rho)})}\leq \frac{S(y,\rho)}{m(y,\rho)}\|u_n-u\|_{W_g^{1,p}(M)} \leq 
		\frac{S(y,\rho)}{m(y,\rho)} \left( \sup_n \|u_n\|_{W_g^{1,p}(M)}+\|u\|_{W_g^{1,p}(M)} \right).$$ 
		According to {(ii)} and relation \eqref{M-elso} we have that $$\ds 	\lim_{d_g(x_0, y) \to \infty}\frac{S(y,\rho)}{m(y,\rho)}=0,$$ thus for every $\varepsilon>0$ there exists $R_\varepsilon>0$ such that \begin{equation}\label{NagyGombre}\sup_{d_g(x_0,y)\geq R_\varepsilon}\|u_n-u\|_{C^0\left(\overline{B_g(y,\rho)}\right)}\leq \frac{\varepsilon}{2}\ \mbox{ for every }n\in \mathbb{N}.\end{equation}	
		On the other hand, $u_n\weak u$ in $W_G^{1,p}(M)$, thus by the Rellich--Kondrachov-type result (see Lemma \ref{RK-lemma}) it follows that $u_n\to u$ in $C^0\left(\overline{B(y,R_\varepsilon)}\right)$, hence there exists $n_\varepsilon\in \mathbb{N}$ such that \begin{equation}\label{KicsiGombre}\|u_n-u\|_{C^0\left(\overline{B(y,R_\varepsilon)}\right)}<\varepsilon\ \mbox{ for all } \ n\ge n_\varepsilon.
		\end{equation}
		Inequalities \eqref{NagyGombre} and \eqref{KicsiGombre} yield that $u_n\to u$ in $L^\infty(M)$, which concludes the proof. \hfill $\square$

	\begin{remark}\label{remark-2} \rm 
	(a) The quantity $m(y,\rho)$ can be easily estimated on non-positively curved space forms. Indeed, for instance, if $d=2$ and $G=O(2)$, $x_0=0$, then for $\rho>0$ enough small, one has  $m(y,\rho)\sim \frac{\pi|y|}{\rho}$ as $|y|\to \infty$ in the Euclidean case $\mathbb R^2$, and $m(y,\rho)\sim \frac{\pi}{\rho} \frac{|y|}{1-|y|^2}$ as $|y|\to 1$ in the Poincar\'e ball model $\mathbb H^2_{-1}=\{y\in \mathbb R^2:|y|<1\}$ (with constant sectional curvature $-1$).

	(b) 	Relation (\ref{m-tilde-m}) can be viewed as a comparison of the maximal number of mutually disjoint geodesic balls with radius $\rho$  on $(M,g)$ and the Euclidean space, respectively. In fact, $\tilde m(t,\rho) $ is related to the particular inner product given by $g_{x_0}$, which is equivalent to the usual Euclidean metric. This comparison result can be efficiently applied for every Hadamard manifold. 
		In particular, in the usual Euclidean space $\mathbb R^d$, a simple covering argument shows that  $$\tilde m(t,\rho)=\omega\left({V^{-1}_{\rm cap}(2\rho/t)}\right)\ {\rm as}\ \ t\to \infty,\footnote{$f(t)=\omega(g(t))$ as $t\to \infty$ if there exist
		$c,\delta>0$ such that $|f(t)|\geq c|g(t)|$ for every $t>
		\delta$.}$$ 
		where  $V_{\rm cap}(r)$ denotes the area of the spherical cap of radius $r>0$ on the unit  $(d-1)$-dimensional sphere. For instance, when $d=3$, we have $\tilde m(t,\rho)=\omega\left(\sin^{-2}(\rho/t)\right)\ {\rm as}\ \ t\to \infty$.
	
	\end{remark}

\section{Proof of Theorem \ref{mainThm:Riemann2}}\label{4-sect}

\underline{$({\rm i})\Rightarrow ({\rm ii})$} Let us assume by contradiction that $(\mathbf{EC})_G$ fails, i.e. there exist $K\in \mathbb N$ and a sequence $\{x_n\}_n \subset M$ such that $$m(x_n,\rho)\leq K\mbox{ for every }n\in \mathbb N\mbox{ and }d_g(x_0,x_n)\to \infty\mbox{ as }n\to \infty.$$ 
We are going to prove that $x_n\in \mathscr{O}_{4(K+1)\rho}$ for every $n\in \mathbb N,$ which will imply in particular that $\mathscr{O}_{4(K+1)\rho}$ is unbounded, contrary to our assumption. We recall that $\mathscr{O}_{t}=\{x\in M:\:\mathrm{diam}\mathcal{O}_G^x\leq t\}$, $t>0$. 

In order to prove the claim, it suffices to show that $\mathrm{diam}\mathcal{O}_G^{x_n}\leq 4(K+1)\rho$ for every $n\in \mathbb N$. To do this,  let $n\in \mathbb N$ be fixed and $k_n:=m(x_n,\rho)\leq K$. By the definition of $m(x_n,\rho)$, there exist $\xi_i:=\xi_i^n\in G$, $i\in \{1,...,k_n\},$ such that $B_{g}(\xi_{i}x_n,\rho)\cap B_{g}(\xi_{j}x_n,\rho) = \emptyset,\forall\,i\neq j$, $i,j\in \{1,...,k_n\},$ and the number $k_n\in \mathbb N$ is maximal with this property. 

On one hand, if we pick an arbitrary element $\xi\in G$, it follows that there exists $i\in \{1,...,k_n\}$ such that $d_g(\xi x_n, \xi_ix_n)<2\rho$. If this is not the case, i.e., $d_g(\xi x_n, \xi_ix_n)\geq 2\rho$ for every $i\in \{1,...,k_n\}$, it follows that $B_{g}(\xi x_n,\rho)\cap B_{g}(\xi_{i}x_n,\rho)=\emptyset,\forall i \in \{1,...,k_n\},$ i.e., one can find one more element $\xi_{k_n+1}\in G$ with the disjointness property, i.e., 
$B_{g}(\xi_{i}x_n,\rho)\cap B_{g}(\xi_{j}x_n,\rho)=\emptyset,\forall\,i\neq j$, $i,j\in \{1,...,k_n+1\},$
which contradicts the maximality of $k_n=m(x_n,\rho)$. Accordingly, 
$$\mathrm{diam}\mathcal{O}_G^{x_n}\leq 4\rho+ \mathrm{diam}\{\xi_{i}x_n:i\in \{1,...,k_n\}\}.$$

We claim that $\{\xi_{i}x_n:i\in \{1,...,k_n\}\}\subset B_g(\xi_1 x_n,2k_n\rho)$; clearly, we may put any element $\xi_{i}\in G$, $ i\in \{1,...,k_n\}$ instead of $\xi_1\in G$ in the right hand side of the above inclusion. We observe that for $k_n=1$ the claim trivially holds. Thus, let $k_n\geq 2$.   Assume the contrary, i.e., there exists $i_0\in \{2,...,k_n\}$ such that $\xi_{i_0}x_n\notin B_g(\xi_1 x_n,2k_n\rho)$, that is $$d_g(\xi_{i_0}x_n,\xi_1 x_n)\geq 2k_n\rho.$$ We now fix a geodesic segment $\tilde \gamma:[0,1]\mapsto \mathcal{O}_G^{x_n}$ joining the points $\xi_1 x_n\in \mathcal{O}_G^{x_n}$ and $\xi_{i_0}x_n\in \mathcal{O}_G^{x_n}$; this can be done due to the fact that $\mathcal{O}_G^{x_n}$ is a complete connected submanifold of $(M,g)$ (as a closed submanifold of the the complete Riemannian manifold $(M,g)$), see do Carmo \cite[ Corollary 2.10, p. 149]{doCarmo}). Since $d_g(\tilde \gamma(0),\tilde \gamma(1))=d_g(\xi_1 x_n,\xi_{i_0}x_n)\geq 2k_n\rho$, by a continuity reason, we may fix $0<t_1<...<t_{k_n-1}<1$ such that $$d_g(\xi_1x_n,\tilde \gamma(t_j))=2j\rho \mbox{ for every }j\in \{1,...,k_n-1\}.$$ This particular choice clearly shows that $B_g(\tilde \gamma(t_j),\rho)$ are situated in some concentric annuli with the same width; more precisely, 
$$B_g(\tilde \gamma(t_j),\rho)\subset B_g(\xi_1 x_n,(2j+1)\rho)\setminus B_g(\xi_1 x_n,(2j-1)\rho),\ \ j\in \{1,...,k_n-1\}.$$
Beside of the latter property, by $d_g(\xi_{i_0}x_n,\xi_1 x_n)\geq 2k_n\rho$ we also have that 
$$ B_g(\tilde \gamma(1),\rho)\cap  B_g(\xi_1 x_n,(2k_n-1)\rho)=\emptyset.$$ 
Combining all these constructions, it follows that the balls $$B_g(\tilde \gamma(0),\rho)=B_g(\xi_1 x_n,\rho), B_g(\tilde \gamma(t_1),\rho) ...,B_g(\tilde \gamma(t_{k_n-1}),\rho) \mbox{ and }B_g(\tilde \gamma(1),\rho)=B_g(\xi_{i_0}x_n,\rho)$$ are mutually disjoint  sets, whose centers belong to ${\rm Im}\tilde \gamma\subset \mathcal{O}_G^{x_n}$.  Since the number of these balls is $k_n+1$, this contradicts again the maximality of  $k_n=m(x_n,\rho)$. 

Accordingly, $$\mathrm{diam}\mathcal{O}_G^{x_n}\leq 4\rho+ 4k_n\rho\leq 4(K+1)\rho,$$
which concludes the proof. 

\underline{$({\rm ii})\Rightarrow ({\rm iii})$}   We shall focus first on the Morrey-case (\textbf{M}), i.e., we assume that $p>d$ and $q=\infty;$ then we discuss the cases (\textbf{S}) and (\textbf{MT}).

Similarly to \eqref{M-elso}, we are going to prove that for every fixed $\rho>0$ one has
\begin{equation}\label{M-masodik}
{\displaystyle \inf_{y\in M}{\displaystyle S(y,\rho)^{-1}>0}},
\end{equation}
where $S(y,\rho)$ is the embedding constant in  $W^{1,p}_g(B_g(y,\rho))\embd C^0(\overline{B_g(y,\rho)})$, see 	Lemma \ref{RK-lemma}.

We have that for any $\varepsilon>0$ there exists $r_H>0$ depending only on $\varepsilon, d, K$ and $i_{0}$, which satisfies the following property: for any $y\in M$ there exists a harmonic coordinate chart $\varphi:B_g(y,r_H)\to \mathbb{R}^d$, such that $\varphi(y)=0$, and the components $(g_{jl})$ of $g$ in this chart satisfy 
\begin{equation}\label{local_chart1}
\displaystyle \frac{1}{1+\varepsilon}\delta_{jl}\le g_{jl}\le (1+\varepsilon)\delta_{jl}
\end{equation} 
as bilinear forms. Fix $\rho<r_H$, then it is obvious that 
\begin{equation}\label{gombokfontos1}
B_e\left(0,\frac{\rho}{\sqrt{1+\varepsilon}}\right) \ \subseteq \ \Omega_y\coloneqq\varphi\left(B_g(y,\rho)\right) \ \subseteq \  B_e(0,\sqrt{1+\varepsilon}\rho) \ \subset \ \mathbb{R}^d.
\end{equation}
On the other hand, combining  \eqref{egyik1} with \eqref{gombokfontos1}, we have that 
\begin{align*}
S(y,\rho)^{-1} &= \inf_{u\in W_{g}^{1,p}(B_{g}(y,\rho))}\frac{{\displaystyle \left(\int_{B_{g}(y,\rho)}(|\nabla_{g}u|^{p}+|u|^{p})\mathrm{d}v_{g}\right)^{\frac{1}{p}}}}{{\displaystyle \sup_{x\in\overline{B_{g}(y,\rho)}}|u(x)|}}   \\
&\geq (1+\varepsilon)^{-\frac{d+p}{2p}}  \inf_{u\in W_{g}^{1,p}(B_{g}(y,\rho))}\frac{{\displaystyle \left(\int_{\Omega_y}(|\nabla(u \circ \varphi^{-1})|^{p}+|u \circ \varphi^{-1}|^{p}) \mathrm{d}x\right)^{\frac{1}{p}}}}{{\displaystyle \sup_{x\in\overline{\Omega_y}}|u \circ \varphi^{-1}(x)|}} \\
&\geq (1+\varepsilon)^{-\frac{d+p}{2p}} \inf_{f\in W^{1,p}(\Omega_y)}\frac{\|f\|_{W^{1,p}(\Omega_y)}}{\displaystyle \|f\|_{C^0(\overline{\Omega_y})}}.\end{align*} 
Let $f^*: \Omega_y^* \to [0, \infty)$ be the symmetric decreasing rearrangement of the function $f$ (see Lieb and Loss \cite[Section 3.3]{LiebLoss}), thus ${\rm Vol}_e(\Omega_y)={\rm Vol}_e(\Omega_y^*)$ and
$$\inf_{f\in W^{1,p}(\Omega_y)}\frac{\|f\|_{W^{1,p}(\Omega_y)}}{\displaystyle \|f\|_{C^0(\overline{\Omega_y})}}\geq \inf_{f^*\in W^{1,p}(\Omega_y^*)}\frac{\|f^*\|_{W^{1,p}(\Omega_y^*)}}{\displaystyle \|f^*\|_{C^0(\overline{\Omega_y^*})}}=\inf_{f^*\in W^{1,p}(\Omega_y^*)}\frac{\|f^*\|_{W^{1,p}(\Omega_y^*)}}{\displaystyle f^*(0)}.$$ 
Since $B_e\left(0,\frac{\rho}{\sqrt{1+\varepsilon}}\right)\subseteq\Omega_y^*\subseteq B_e(0,\sqrt{1+\varepsilon}\rho)\subset \mathbb{R}^d$, we have that 
$$W^{1,p}\left(B_e\left(0,\frac{\rho}{\sqrt{1+\varepsilon}}\right)\right)\supseteq W^{1,p}(\Omega_y^*)\supseteq W^{1,p}(B_e(0,\sqrt{1+\varepsilon}\rho)).$$ 
Hence
$$\inf_{f^*\in W^{1,p}(\Omega_y^*)}\frac{\|f^*\|_{W^{1,p}(\Omega_y^*)}}{\displaystyle f^*(0)}\geq \inf_{f^*\in W^{1,p}\left(B_e\left(0,\frac{\rho}{\sqrt{1+\varepsilon}}\right)\right)}\frac{\|f^*\|_{W^{1,p}\left(B_e\left(0,\frac{\rho}{\sqrt{1+\varepsilon}}\right)\right)}}{\displaystyle \|f^*\|_{C^0\left(\overline{B_e\left(0,\frac{\rho}{\sqrt{1+\varepsilon}}\right)}\right)}}>0,$$	
meaning that ${\displaystyle \inf_{y\in M}S(y,\rho)^{-1}>0}$, which concludes the proof of \eqref{M-masodik}.

 Now, let  $\{u_n\}_n \subset W^{1,p}_G(M)$ be a bounded sequence and $\rho>0$ be an arbitrarily fixed number.  
Then, up to a subsequence, $u_n\weak u$ in $W^{1,p}_G(M)$. 
By using Lemma \ref{RK-lemma}, we obtain 
$$\|u_n-u\|_{C^0(\overline{B_g(y,\rho)})}\leq \frac{S(y,\rho)}{m(y,\rho)}\|u_n-u\|_{W_g^{1,p}(M)} \leq 
\frac{S(y,\rho)}{m(y,\rho)} \left( \sup_n \|u_n\|_{W_g^{1,p}(M)}+\|u\|_{W_g^{1,p}(M)} \right).$$ 
Due to the validity of $(\mathbf{EC})_G$ and relation \eqref{M-masodik} we have that $$\ds 	\lim_{d_g(x_0, y) \to \infty}\frac{S(y,\rho)}{m(y,\rho)}=0,$$ thus for every $\varepsilon>0$ there exists $R_\varepsilon>0$ such that \begin{equation}\label{NagyGombre1}\sup_{d_g(x_0,y)\geq R_\varepsilon}\|u_n-u\|_{C^0\left(\overline{B_g(y,\rho)}\right)}\leq \frac{\varepsilon}{2}\ \mbox{ for every }n\in \mathbb{N}.\end{equation}	
Since $u_n\weak u$ in $W_G^{1,p}(M)$,  by the Rellich--Kondrachov-type result (see Lemma \ref{RK-lemma}) it follows that $u_n\to u$ in $C^0\left(\overline{B(y,R_\varepsilon)}\right)$, hence there exists $n_\varepsilon\in \mathbb{N}$ such that \begin{equation}\label{KicsiGombre1}\|u_n-u\|_{C^0\left(\overline{B(y,R_\varepsilon)}\right)}<\varepsilon\ \mbox{ for all } \ n\ge n_\varepsilon.
\end{equation}
Inequalities \eqref{NagyGombre1} and \eqref{KicsiGombre1} yield that $u_n\to u$ in $L^\infty(M)$,  ending the proof in the admissible case (\textbf{M}).

Let us fix an arbitrary $d$-admissible pair $(p,q)$ from (\textbf{S}) or (\textbf{MT}). A suitable modification of the above argument, based on Lemma \ref{PolyaSzego}/(i), implies that
$$S(y,\rho)^{-1} := \inf_{u\in W_{g}^{1,p}(B_{g}(y,\rho))}\frac{{\displaystyle \left(\int_{B_{g}(y,\rho)}(|\nabla_{g}u|^{p}+|u|^{p})\mathrm{d}v_{g}\right)^{\frac{1}{p}}}}{{\displaystyle \left(\int_{B_{g}(y,\rho)}|u|^{q}\mathrm{d}v_{g}\right)^{\frac{1}{q}}}}>0.$$
The latter inequality together with the validity of $(\mathbf{EC})_G$ implies that
$$\ds 	\lim_{d_g(x_0, y) \to \infty}\frac{S(y,\rho)}{m(y,\rho)}=0.$$
The rest is analogous as before, by using the Rellich--Kondrachov compactness result  $W^{1,p}(B_g(y,R))\hookrightarrow L^q(B_g(y,R))$ for any $R>0$ fixed. 

\underline{$({\rm iii})\Rightarrow ({\rm iv})$} Trivial. 

\underline{$({\rm iv})\Rightarrow ({\rm i})$} We follow the argument presented in Skrzypczak and Tintarev \cite[Theorem 4.3]{Skrzypczak2} (see also Tintarev \cite[Theorem 7.10.12]{TintarevKonyv}); in fact, the admissible case (\textbf{S}) is exactly the one proved in Tintarev \cite{TintarevKonyv}. Since the case (\textbf{MT}) can be similarly discussed as (\textbf{S}), we restrict our proof to the remaining admissible case (\textbf{M}). 

 Suppose that $G$ is not coercive, thus there exists $R>0$ and a discrete sequence of $x_n\in M$, such that $\mathcal{O}_G^{x_n}\subset B_g(x_n,R)$ and $d_g(x_0,x_n)\to \infty$ as $n\to \infty$.  Let $r \in  (0, \mathrm{inj}_{(M,g)})$ and let us replace $\{x_n\}_n$ with a renumbered subsequence such that distance between any two terms in the sequence will be greater than $2(R + r)$. We define a sequence of functions $\{f_n\}_n$ by 
 $$f_n(x)=\int_{G} (r-d_g(\xi x,x_n))_+\,\mathrm{d}\xi,$$ 
where the Haar measure of $G$ is normalized to the value $1$, and $u_+=\max\{0,u\}$. It is easy to see that $f_n\in W^{1,p}_G(M)$ for every $n\in \mathbb N$ and any fixed $p\in (1,\infty)$; indeed, since 
the support of $f_n$ is a subset of  $\overline{B_g(\xi^{-1}x_n,r)}$, by an elementary computation (with \eqref{dist-gradient} in hand) and the volume-estimate \eqref{volume-comp-altalanos-1}, it follows that 
$$\|f_n\|_{W_g^{1,p}(M)}\leq C(p,r,d),$$ where $C(p,r,d)>0$ does not independent on $n$.
On one hand, since the supports of the functions $f_n$ are disjoint sets, we have that $$\|f_l-f_n\|_{L^\infty(M)}=\|f_l\|_{L^\infty(M)}+\|f_n\|_{L^\infty(M)}\geq 2\inf_n \|f_n\|_{L^\infty(M)},\ \ l\neq n.$$
On the other hand, 
\begin{align*}
	\mathrm{Vol}_g(B_g(x_n,R+r)) \|f_n\|_{L^\infty(M)}  \geq \int_M f_n(x)\,\mathrm{d}v_g =&\int_M \int_{G} (r-d_g(\xi x,x_n))_+\,\mathrm{d}\xi \,\mathrm{d}v_g(x)\\
	=& \int_G \int_{M} (r-d_g(\xi x,x_n))_+\,\mathrm{d}v_g(x)\,\mathrm{d}\xi \\
	\overset{x \coloneqq \xi^{-1}y}{\ \ \ \ \ \ =}&\int_G \int_{M} (r-d_g(y,x_n))_+\,\mathrm{d}v_g(y)\,\mathrm{d}\xi \\
	=&\int_{M} (r-d_g(y,x_n))_+\,\mathrm{d}v_g(y)\\
	\geq&~ \frac{r}{2}\mathrm{Vol}_g\left(B_g\left(x_n,\frac{r}{2}\right)\right). 
\end{align*}
Since $(M,g)$ is a Riemannian manifold with bounded geometry, then ${\rm Vol}_g$ is doubling on $(M,g)$, thus $$\|f_n\|_{L^\infty(M)}\geq \tilde C(r,R,d),$$ where $\tilde C(r,R,d)>0$ does not independent on $n$. Thus,  $\{f_n\}_n$ is not a Cauchy sequence in $L^\infty(M)$,  a contradiction. \hfill $\square$\\

%
%
%

	The following theorem is related to the results obtained in Hebey and Vaugon \cite{HebeyVaugon}:
	\begin{theorem}\label{key_RiemannLemma2}
		Let $(M,g)$ be a $d$-dimensional complete non-compact Riemannian manifold with Ricci curvature bounded from below having positive injectivity radius, and let $G$ be a compact connected subgroup of $\mathrm{Isom}_{g}(M)$ such that $\mathrm{Fix}(G)=\{x_0\}$ for some $x_0\in M$ and  $\rho>0$ be small enough. Assume that there exists $\kappa=\kappa(G,d) > 0$  such that for every $y\in M$ with $d_g(x_0,y) \geq 1$, one has
		\begin{equation*}\mathcal H^{l}(\mathcal{O}_G^y) \geq \kappa \cdot d_g(x_0,y),\eqno{(\mathbf{H})}
		\end{equation*}
		where $l=l(y)=\dim \mathcal{O}_G^y \geq 1$. Then the embedding $W_{G}^{1,p}(M)\hookrightarrow L^{\infty}(M)$	is compact for every $p>d$.  
	\end{theorem}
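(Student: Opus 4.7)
My plan is to adapt the Strauss-type argument to the Riemannian setting, deriving a uniform pointwise decay estimate for $G$-invariant Sobolev functions directly from the orbit growth condition $(\mathbf{H})$, and then deducing compactness in the spirit of the proofs of Theorems \ref{mainThm:Riemann1} and \ref{mainThm:Riemann2}. Throughout, bounded geometry (Ricci bounded below and positive injectivity radius) provides the uniform harmonic charts needed to transport Euclidean facts to $(M,g)$.

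The first step is to invoke the uniform Morrey-type local embedding: exactly as in the derivation of \eqref{M-masodik} in Section \ref{4-sect}, for $\rho>0$ small enough and every $z\in M$, one has
\[
|u(z)|^p \leq C \int_{B_g(z,\rho)} \bigl(|u|^p + |\nabla_g u|^p\bigr)\,\mathrm{d}v_g, \qquad u \in W_g^{1,p}(M),
\]
with $C=C(\rho,p,d)$ \emph{independent of $z$}. Using $G$-invariance (so that $u$ is constant on $\mathcal{O}_G^y$), integrating the bound over the orbit against $\mathcal{H}^l$ and applying Fubini yields
\[
|u(y)|^p\,\mathcal{H}^l(\mathcal{O}_G^y) \leq C\int_M \bigl(|u|^p + |\nabla_g u|^p\bigr)(w)\,\mathcal{H}^l\bigl(\mathcal{O}_G^y \cap B_g(w,\rho)\bigr)\,\mathrm{d}v_g(w).
\]
The next step is a uniform multiplicity bound $\mathcal{H}^l(\mathcal{O}_G^y \cap B_g(w,\rho)) \leq C_0 = C_0(G,d,\rho)$, valid for every $w\in M$ and every $y$ with $d_g(x_0,y)\geq 1$. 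Combining these three estimates with $(\mathbf{H})$ produces the key decay
\[
|u(y)|^p \leq \frac{C\,C_0}{\kappa}\cdot\frac{\|u\|_{W_g^{1,p}(M)}^p}{d_g(x_0,y)}, \qquad d_g(x_0,y)\geq 1.
\]

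To conclude compactness, a standard two-scale argument applies: for any bounded sequence $\{u_n\}_n \subset W_G^{1,p}(M)$, extract a subsequence with $u_n \rightharpoonup u$ in $W_G^{1,p}(M)$. The above decay applied to $u_n - u$ controls $\|u_n-u\|_{L^\infty(M \setminus \overline{B_g(x_0,R)})}$ uniformly in $n$ by a quantity of order $R^{-1/p}$, while Lemma \ref{RK-lemma} in the Morrey case gives $u_n \to u$ in $C^0(\overline{B_g(x_0,R)})$ for each $R>0$. Choosing $R$ large produces $u_n \to u$ in $L^\infty(M)$.

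The main obstacle is the uniform multiplicity bound. Intuitively it is credible because every $G$-orbit is a homogeneous image of the fixed compact Lie group $G$, so the Killing fields generating the orbit have uniformly bounded norms; combined with the uniform harmonic chart estimates from \eqref{local_chart1} already exploited in the proof of Theorem \ref{mainThm:Riemann2}, the intersection of the orbit with a small geodesic ball should be controllable via a local area-formula argument. Making this rigorous --- in particular, ruling out uncontrolled reentries of the orbit into a single small ball as $d_g(x_0,y)\to\infty$ --- is the delicate technical point, and will likely require a careful analysis of the orbit map $\xi \mapsto \xi y$ with uniformly bounded derivatives, in the spirit of the arguments in \cite{HebeyVaugon}.
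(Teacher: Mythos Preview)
Your approach differs from the paper's and leaves the decisive step open. The paper does not attempt a direct Strauss-type decay; instead it shows that hypothesis $(\mathbf{H})$ forces the expansion condition $(\mathbf{EC})_G$ and then simply invokes the machinery already established in Theorem~\ref{mainThm:Riemann2}. Concretely, maximality of $m(y,\rho)$ gives the covering
\[
\mathcal{O}_G^y \subseteq \bigcup_{i=1}^{m(y,\rho)} \bigl(B_g(\xi_i y, k\rho)\cap \mathcal{O}_G^y\bigr),\qquad k>2,
\]
and then, using the exponential chart at each $\xi_i y\in\mathcal{O}_G^y$, one bounds $\mathcal{H}^l\bigl(B_g(\xi_i y,k\rho)\cap\mathcal{O}_G^y\bigr)\le k^{l}\omega_l\rho^{l}(1+o(\rho))$. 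Combining this with $(\mathbf{H})$ yields $\kappa\,d_g(x_0,y)\le C(d)\,m(y,\rho)$, hence $m(y,\rho)\to\infty$ as $d_g(x_0,y)\to\infty$, and the rest is Theorem~\ref{mainThm:Riemann2}. This avoids the Fubini manipulation entirely and recycles the uniform local Morrey constant \eqref{M-masodik} through $m(y,\rho)$ rather than through an orbit integral.

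The ``uniform multiplicity bound'' you flag as the obstacle reduces, after the trivial observation $\mathcal{O}_G^y\cap B_g(w,\rho)\subset \mathcal{O}_G^y\cap B_g(z,2\rho)$ for a nearest $z\in\mathcal{O}_G^y$, to precisely the same local orbit-volume estimate the paper proves. However, your suggested justification via ``Killing fields generating the orbit have uniformly bounded norms'' is incorrect: already for $G=SO(d)$ acting on $\mathbb{R}^d$ the infinitesimal rotations have norm $|x|$ at $x$, so the derivative of the orbit map $\xi\mapsto\xi y$ blows up as $|y|\to\infty$. The correct argument is purely local, in normal coordinates at a point of the orbit, using only that $\mathcal{O}_G^y$ is an $l$-dimensional submanifold through the center --- no global control on the orbit map is needed. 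With that estimate in hand your Fubini route would also go through, but reducing to $(\mathbf{EC})_G$ is cleaner and fits the paper's structure.
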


	\begin{proof}
		Let $y\in M$ be arbitrarily fixed such that $d_g(x_0,y) \geq 1$, and consider the elements $\xi_i\in G$, $i=1,\dots,m(y,\rho)$ which appear in the definition of $m(y,\rho)$ in \eqref{myrho}. Let also $l=l(y)=\dim \mathcal{O}_G^y.$
		Notice that by the connectedness of $G$, we have $l \geq 1$. 
		We claim that 
		\begin{equation}\label{area-0}
			\mathcal H^{l}(\mathcal{O}_G^y)\leq m(y,\rho)\sup_{i}\mathcal H^{l}(B_g(\xi_i y,k\rho)\cap \mathcal{O}_G^y),
		\end{equation}
		for every $k>2$ (independent of $y$). 
		To see this, it is sufficient to prove that 
		$$\mathcal{O}_G^y\subseteq \bigcup_{i}\left(B_g(\xi_i y,k\rho)\cap \mathcal{O}_G^y\right).$$
		
		Let us fix $x\in \mathcal{O}_G^y$ arbitrarily. 	First, if 
		$\displaystyle x\in \underset{i}{\bigcup}\left(B_g(\xi_i y, \rho)\cap \mathcal{O}_G^y\right)$, we have nothing to prove. If  $\displaystyle x\notin  \underset{i}{\bigcup}\left(B_g(\xi_i y, \rho)\cap \mathcal{O}_G^y\right),$ then there exists $i_0\in\{1,\dots,m(y,\rho)\}$ such that 
		$$d_g\left(x,\partial \left(B_g(\xi_{i_0} y,\rho) \cap \mathcal{O}_G^y \right)\right)<\rho.$$
		Indeed, if the contrary holds, then  $B_g(x,\rho)\cap B_g(\xi_i y,\rho)=\emptyset$, $i=1,\dots,m(y,\rho)$, thus $B_g(x,\rho)$ is a new ball in the definition of $m(y,\rho)$, contradicting the maximality of $m(y,\rho)$. 
		Therefore, $\displaystyle d_g(x,\xi_{i_0} y)<2\rho$, which means that $x\in B_g(\xi_{i_0}y,k\rho)\cap \mathcal{O}_G^y$ for every $k>2$, which proves (\ref{area-0}). 
		
		We also notice that since  Fix$(G)=\{x_0\}$, one has that $\mathcal{O}_G^y\subset \partial B_g(x_0,d_g(x_0,y))$. Indeed, if $x=\xi y\in \mathcal{O}_G^y$ then $d_g(x_0,x)=d_g(x_0,\xi y)=d_g(\xi x_0,\xi y)=d_g(x_0, y).$ Thus $\mathcal{O}_G^y$ is an $l$-dimensional submanifold of $\partial B(x_0,d_g(x_0,y))$, $l \leq d-1$. 
		Therefore, a slight modification of Gallot, Hulin, and Lafontaine \cite[Theorem 3.98]{GallotHulinLafontaine} gives that for every $i=1,\dots,m(y,\rho)$,  
		$$\mathcal H^{l}(B_g(\xi_i y,k\rho)\cap \mathcal{O}_G^y)\leq k^{l}\omega_{l} \rho^{l}(1+o(\rho))\ \ {\rm as}\ \ \rho\to 0,$$ 
		whenever $k>2$ is kept small (e.g. $k=3$). To see this, we explore  that $\exp_{\xi_i y}:T_{{\xi_i y}}M\to M$ is a local diffeomorphism at $0\in T_{{\xi_i y}}M$ with $d(\exp_{\xi_i y})_0=id$, while for small $\rho>0$ one has $\exp_{\xi_i y}^{-1}(B_g(\xi_i y,k\rho)\cap \mathcal{O}_G^y)=B_e(0,k\rho)\cap \exp_{\xi_i y}^{-1}(\mathcal{O}_G^y)$, and $0<\mathcal H^l(\exp_{\xi_i y}^{-1}(\mathcal{O}_G^y))<\infty$.
		
		Now, if we fix $\rho\in (0,1)$ from the usual range (see Gallot, Hulin, and Lafontaine \cite{GallotHulinLafontaine}), it follows by (\ref{area-0})  that 
		$$\mathcal H^{l}(\mathcal{O}_G^y)\leq m(y,\rho)k^{l+1}\omega_{l} \rho^{l}.$$
		Hypothesis ($\mathbf{H}$) and the latter estimate imply that 
		$\kappa \cdot d_g(x_0,y)\leq m(y,\rho)k^{l+1}\omega_{l} \rho^{l}.$
		By using this inequality, one can obtain an $l=l(y)$-independent estimate, namely
		$$\kappa \cdot d_g(x_0,y)\leq m(y,\rho)k^{d}\omega_{d-1} \rho.$$
		Letting $d_g(x_0,y)\to \infty$ 
		immediately implies that $m(y,\rho)\to \infty$. The rest of the proof is similar to the last part of the proof of Theorem \ref{mainThm:Riemann2}.
	\end{proof}
	
	In the sequel, we provide two examples where hypothesis \hyperref[H2felt]{$(\mathbf{H})$} holds. 
	
	\begin{example}\rm 
		Let ${\rm Sym}(d,\mathbb{R})$ be the set of symmetric $d\times d$
		matrices with real values, ${\rm P}(d,\mathbb{R})\subset{\rm Sym}(d,\mathbb{R})$
		be the cone of symmetric positive definite matrices, and ${\rm P}(d,\mathbb{R})_{1}$
		be the subspace of matrices in ${\rm P}(d,\mathbb{R})$ with determinant
		one. The set ${\rm P}(d,\mathbb{R})$ is endowed with the scalar product
		\[
		\langle U,V\rangle_{X}={\rm Tr}(X^{-1}VX^{-1}U)\ \ {\rm for\ all}\ \ X\in{\rm P}(d,\mathbb{R}),\ U,V\in T_{X}({\rm P}(d,\mathbb{R}))\simeq{\rm Sym}(d,\mathbb{R}),
		\]
		where ${\rm Tr}(Y)$ denotes the trace of $Y\in{\rm Sym}(d,\mathbb{R})$.
		One can prove that $({\rm P}(d,\mathbb{R})_{1},\langle\cdot,\cdot\rangle)$
		is a  Riemannian manifold (with non-constant sectional
		curvature). On the other hand, since the scalar curvature of the Riemannian
		manifold $({\rm P}(d,\mathbb{R})_{1},\langle\cdot,\cdot\rangle)$
		is constant, $S=-\frac{1}{8}d(d-1)(d+2)$, see Andai \cite{Andai} and Moakher and Z\'era\"i \cite{MM11}, it follows that its Ricci curvature is bounded from below.
		
		The special linear group $SL(d)$ leaves ${\rm P}(d,\mathbb{R})_{1}$
		invariant and acts transitively on it. Moreover, for every $\sigma\in{SL}(d)$,
		the map $[\sigma]:{\rm P}(d,\mathbb{R})_{1}\to{\rm P}(d,\mathbb{R})_{1}$
		defined by $[\sigma](X)=\sigma X\sigma^{t}$, is an isometry, where
		$\sigma^{t}$ denotes the transpose of $\sigma.$ If $G={SO}(d)$,
		we can prove that ${\rm Fix}_{{\rm P}(d,\mathbb{R})_{1}}(G)=\{I_{d}\}$,
		where $I_{d}$ is the identity matrix; for more details, see Krist\'aly
		\cite{KristalyJFA}. On the other hand, the metric function on ${\rm P}(d,\mathbb{R})$
		is given by
		$
		d_{P}(X,Y)=\sqrt{\mathrm{Tr}\left(\ln^{2}\left(X^{-\frac{1}{2}}YX^{-\frac{1}{2}}\right)\right)},  
		$ see Krist\'aly \cite{KristalyJMPA}.
		
		For simplicity, fix $d=2$, and consider the following positive-definite symmetric
		matrix 
		\[
		X=\left(\begin{array}{cc}
			a & b\\
			b & c
		\end{array}\right),\ a,c>0,\ \mbox{ and }ac-b^{2}=1.
		\]
		Thus
		\[
		\mathcal{O}_{G}^{X}=\left\{ X\xi:\ \xi=\left(\begin{array}{cc}
			\cos\theta & \sin\theta\\
			-\sin\theta & \cos\theta
		\end{array}\right),\theta\in [0,2\pi]\right\} .
		\]
		One can see that 
		\[
		\mathcal{H}^{1}\left(\mathcal{O}_{G}^{X}\right)=2\pi\|X\|_{F}=
		2\pi\sqrt{a^{2}+2b^{2}+c^{2}}\ \ {\rm and}\ \ 	d_{P}(I_{2},X)=\sqrt{\mathrm{Tr}\left(\ln^{2}\left(X\right)\right)}=\sqrt{\ln^{2}(\lambda_{1})+\ln^{2}(\lambda_{2})},\]
		where $\lambda_{1},\lambda_{2}$ are the positive eigenvalues of the
		matrix $X$. Since $
		\sqrt{a^{2}+2b^{2}+c^{2}}=\sqrt{\lambda_{1}^{2}+\lambda_{2}^{2}},
		$ 
		by using a Bernoulli-type inequality, it turns out that 
		$
		\mathcal{H}^{1}\left(\mathcal{O}_{G}^{X}\right)\geq\kappa d_{P}(I_{2},X),
		$
		with  $\kappa:=\pi$, 
		which proves the validity of ($\mathbf{H}$).
	\end{example}

	\begin{example}\rm 
		Let $G=O(d_1)\times\dots\times O(d_k)$ with $d_i\geq 2$, $i=1,\dots,k,$ and $d_1+\dots+d_k=d$. Let $y=(y_1,\dots,y_k)\in \mathbb R^{d_1}\times\dots\times \mathbb R^{d_k}$. It is clear that $\mathcal{O}_G^y=S_{|y_1|}^{d_1-1}\times\dots\times S_{|y_k|}^{d_k-1},$
		where $S_r^{\alpha-1}$ denotes the sphere with radius $r>0$ in $\mathbb R^\alpha$. Let $I(y)=\{i\in \{1,\dots,k\}:|y_i|\neq 0\}$. Then $\displaystyle l=l(y)=\sum_{i\in I(y)}(d_i-1)$ and  $$\mathcal H^{l}(\mathcal{O}_G^y)=\sum_{i\in I(y)}\mathcal H^{d_i-1}(S_{1}^{d_i-1})|y_i|^{d_i-1}\geq 2\pi \sum_{i\in I(y)}|y_i|^{d_i-1}=2\pi \sum_{i=1}^k|y_i|^{d_i-1}.$$
		Now, let $|y_1|+\dots+|y_k|=c\geq 1$. By the scaling $y_i \coloneqq cz_i$, one has $|z_1|+\dots+|z_k|= 1$
		and $$\sum_{i=1}^k|y_i|^{d_i-1}\geq c \sum_{i=1}^k|z_i|^{d_i-1}.$$
		Note that the continuous function $\displaystyle (z_1,\dots,z_k)\mapsto \sum_{i=1}^k|z_i|^{d_i-1}$ attains its minimum on the simplex $|z_1|+\dots+|z_k|= 1$, and this minimum is strictly positive, say $m_G>0$ (otherwise, if $m_G=0$, we would have all variables equal to zero, which is a contradiction). Summing up, it follows that 
		$$\mathcal H^{l}(\mathcal{O}_G^y)\geq 2\pi c m_G=2\pi m_G(|y_1|+\dots+|y_k|)\geq2\pi m_G |y|,$$
		thus $G$ satisfies the assumption in $(\mathbf{H})$. 
	\end{example}

	\section{Sobolev-type embeddings on Randers spaces} \label{sec:Finsler}

	\subsection{Elements from Finsler geometry} 
	
	Let $M$ be a smooth, $d$-dimensional manifold and $TM=\bigcup_{x \in 
		M}T_{x} M $ its tangent bundle. Throughout this subsection,
	the function $F: TM\to [0,\infty)$ is given by the {\it Randers metric}
	\begin{equation}\label{Randers-metrika}
		F(x,y)=\sqrt{g_x(y,y)}+\beta_x(y),\ (x,y)\in TM,
	\end{equation}
	where $g$ is a Riemannian metric on $M$, $\beta_x$ is a $1$-form on
	$M$, and we assume that
	$$\|\beta\|_g(x)=\sqrt{g_x^*(\beta_x,\beta_x)}<1,\ \forall x\in M.$$
	Here, the co-metric $g^*_x$ can be identified by the inverse of the
	symmetric, positive definite matrix $g_x$. The pair $(M,F)$ is called a
	{\it Randers space}, which is a typical Finsler manifold, i.e. the
	following properties hold:
	\begin{itemize}
		
		\item[(a)] $F\in C^{\infty}(TM\setminus\{ 0 \});$
		
		\item[(b)] $F(x,ty)=tF(x,y)$ for all $t\geq 0$ and $(x,y)\in TM;$
		
		\item[(c)] $h_{(x,y)}=[h_{ij}(x,y)]:=\left[\frac12F^{2}%
		(x,y)\right]_{y^{i}y^{j}}$ is positive definite for all $(x,y)\in
		TM\setminus\{ 0 \},$
	\end{itemize}
	see Bao, Chern, and Shen \cite{BaoChernShen}. Clearly, the Randers
	metric $F$ is symmetric, i.e.
	$F(x,-y)=F(x,y)$ for every $(x,y)\in TM,$ if and only if $\beta=0$ (which means that $(M,F)=(M,g)$ is the original Riemannian manifold).

	Let $\sigma: [0,r]\to M$ be a piecewise $C^{\infty}$ curve. The
	value $ L_F(\sigma)= \displaystyle\int_{0}^{r} F(\sigma(t),
	\dot\sigma(t))\,{\text d}t $ denotes the \textit{integral length} of
	$\sigma.$  For $x_1,x_2\in M$, denote by $\Lambda(x_1,x_2)$ the set
	of all piecewise $C^{\infty}$ curves $\sigma:[0,r]\to M$ such that
	$\sigma(0)=x_1$ and $\sigma(r)=x_2$. Define the {\it distance
		function} $d_{F}: M\times M \to[0,\infty)$ by
	\begin{equation}\label{quasi-metric}
		d_{F}(x_1,x_2) = \inf_{\sigma\in\Lambda(x_1,x_2)}
		L_F(\sigma).
	\end{equation}
	One clearly has that $d_{F}(x_1,x_2) =0$ if and only if $x_1=x_2,$
	and that $d_F$ verifies the triangle inequality.
	
	The {\it Hausdorff  volume form} ${\text d}V_F$ on the Randers space $(M,F)$ is given by
	\begin{equation}\label{randers-volume-def}
		{\text d}V_F(x)=\left(1-\|\beta\|^2_g(x)\right)^\frac{d+1}{2}{\rm d}v_{g},
	\end{equation}
	where ${\rm d}v_{g}$ denotes the canonical Riemannian volume form induced by $g$ on $M$.
	
	For every $(x,\alpha)\in T^*M$, the  {\it polar transform} (or,
	co-metric) of $F$ from (\ref{Randers-metrika}) is
	\begin{equation}\label{polar-transform}
		F^*(x,\alpha) = \sup_{y\in T_xM\setminus
			\{0\}}\frac{\alpha(y)}{F(x,y)}=\frac{\sqrt{g_x^{*2}(\alpha,\beta)+(1-\|\beta\|_g^2(x))\|\alpha\|_g^2(x)}-g_x^{*}(\alpha,\beta)}{1-\|\beta\|_g^2(x)}.
	\end{equation}
	
	Let $u:M\to \mathbb{R}$ be a differentiable function in the distributional sense. The \textit{gradient} of $u$ is defined by
	\begin{equation}  \label{grad-deriv}
		\boldsymbol{\nabla}_F u(x)=J^*(x,Du(x)),
	\end{equation}
	where $Du(x)\in T_x^*M$ denotes the (distributional) \textit{derivative} of $u$ at $x\in M$ and $J^*$ is the Legendre transform  given by $$J^*(x,y):=\frac{\partial}{\partial y}\left(\frac{1}{2}F^{*2}(x,y)\right).$$  In local coordinates, one has
	\begin{equation}  \label{derivalt-local}
		Du(x)=\sum_{i=1}^n \frac{\partial u}{\partial x^i}(x)\mathrm{d}x^i,
	\end{equation}
	\begin{equation*}
		\boldsymbol{\nabla}_F u(x)=\sum_{i,j=1}^n h_{ij}^*(x,Du(x))\frac{\partial u}{\partial x^i}(x)\frac{\partial}{\partial x^j}.
	\end{equation*}
	
	In general, note that $u\mapsto\boldsymbol{\nabla}_F u $ is not linear. If $x_0\in M$ is
	fixed, then due to Ohta and Sturm \cite{Otha-Sturm}, one has 
	\begin{equation}  \label{tavolsag-derivalt}
		F^*(x,D d_F(x_0,x))=F(x,\boldsymbol{\nabla}_F d_F(x_0,x))=D d_F(x_0,x)(%
		\boldsymbol{\nabla}_F d_F(x_0,x))=1\ \mathrm{for\ a.e.}\ x\in M.
	\end{equation}
	
	Let $X$ be a vector field on $M$. In a local coordinate system $(x^i)$  the \textit{divergence} is defined by div$(X)=\frac{1}{\sigma_F}\frac{\partial}{\partial x^i}(\sigma_F X^i),$ where 
	$$\sigma_F(x)=\frac{\omega_{d}}{\mathrm{Vol}(\{y=(y^i):\ F(x,y^i\frac{\partial }{\partial x^i})<1\})}.$$ 
	

	The Finsler $p$-Laplace operator is defined by $$\boldsymbol{\Delta}_{F,p}u=\mathrm{div}({F^*}^{p-2}(Du) \cdot \boldsymbol{\nabla}_F u),$$
	while the Green theorem reads as: for every $v\in C_0^\infty(M)$,
	\begin{equation}  \label{Green}
		\int_M v\boldsymbol{\Delta}_{F,p} u \,{\mathrm d}V_F(x)=-\int_M
		{F^*}^{p-2}(Du) Dv(\boldsymbol{\nabla}_F u)\,{\text d}V_F(x),
	\end{equation}
	see Ohta and Sturm \cite{Otha-Sturm} and 
	Shen \cite{Shen_Konyv} for $p=2$. Note that in general 
	$\boldsymbol{\Delta}_{F,p} (-u) \neq -\boldsymbol{\Delta}_{F,p} u.$
	When $(M,F)=(M,g)$, the Finsler-Laplace operator is the usual Laplace-Beltrami operator, 
	
	We introduce the Sobolev space
	associated with $\left(M,F\right)$, namely let 
	$$W^{1,p}_F(M)=\left\{u\in W^{1,p}_\mathrm{loc}(M): \int_M {F^*}^p(x,Du(x))\mathrm{d}V_F(x)<+\infty \right\}$$ 
	be the closure of $C^\infty(M)$ with respect to the (asymmetric) norm 
	$$\|u\|_{W^{1,p}_F(M)}=\left(\int_M {F^*}^p(x,Du(x))\,\mathrm{d}V_F(x)+\int_M |u(x)|^p\,\mathrm{d}V_F(x) \right)^{\frac{1}{p}}.$$

	We notice that  the \textit{reversibility constant} associated with $F$ (see (\ref{Randers-metrika})) is given by 
	\begin{equation}  \label{reverzibilis}
		r_{F}=\sup_{x\in M}r_F(x)\ \ \ \mathrm{where}\ \ \ r_F(x)=\sup_{\substack{ y \in T_x M\setminus \{0\}}} \frac{F(x,y)}{F(x,-y)}=\frac{1+\|\beta\|_g(x)}{1-\|\beta\|_g(x)},
	\end{equation}
	see Rademacher \cite{Rademacher} and Zhao and Yuan \cite{Yuan-Zhao}.
	Note that $r_{F}\geq 1$ (possibly, $r_{F}=+\infty$), and $r_{F}= 1$ if
	and only if $(M,F)$ is Riemannian. 
	Analogously, the \textit{uniformity constant} of $F$ is defined by the number
	\begin{equation} \label{uniformity_const}
		l_{F}=\inf_{x\in M}l_F(x)\ \ \ \mathrm{where}\ \ \ l_F(x)= \inf_{y,v,w\in
			T_xM\setminus \{0\}}\frac{h_{(x,v)}(y,y)}{h_{(x,w)}(y,y)}=\left(\frac{1-\|\beta\|_g(x)}{1+\|\beta\|_g(x)}\right)^2,
	\end{equation}
	and measures how far $F$ and $F^*$ are from Riemannian structures, see Egloff \cite{Egloff}. Note that $l_{F}\in [0,1]$, and $l_{F}= 1$ if
	and only if $(M,F)$ is Riemannian, i.e. $\beta = 0$. 
	%

	\subsection{Embedding results on Randers spaces: the influence of reversibility}


	\begin{proof}[Proof of Theorem \ref{Finsler}]
		Let $(M,F)$ be a $d$-dimensional Randers space with 
		$$F(x,y) = \sqrt{g_x(y,y)}+\beta_x(y),\, (x,y)\in TM,$$ 
		where $g$ is a Riemannian metric such that $(M,g)$ is either a Hadamard manifold or a Riemannian manifold with bounded geometry. Let $$\displaystyle a \coloneqq \sup_{x\in M}\|\beta\|_g(x)<1.$$ 
		In this case, the volume form on $(M,F)$ is given by \eqref{randers-volume-def}, one has that
		\begin{equation}\label{randers-volume}
			(1-a^2)^\frac{d+1}{2}{\rm d}v_{g}\leq {\text d}V_F(x)\leq {\rm d}v_{g} .
		\end{equation}
		
		Next, by using the definition of the polar transform of $F$, see \eqref{polar-transform}, we get that 
		\begin{align}F^*(x,\alpha)&\leq \frac{\sqrt{\|\alpha\|_g^2(x)\cdot \|\beta\|_g^2(x)+(1-\|\beta\|_g^2(x))\|\alpha\|_g^2(x)}+\|\alpha\|_g(x)\cdot \|\beta\|_g(x)}{1-\|\beta\|_g^2(x)} \nonumber \\ &= \frac{\|\alpha\|_g(x)(1+\|\beta\|_g(x))}{1-\|\beta\|_g^2(x)}=\frac{\|\alpha\|_g(x)}{1-\|\beta\|_g(x)}\leq \frac{\|\alpha\|_g(x)}{1-a}\label{egyikbecsles}.
		\end{align}
		
		On the other hand,    
		\begin{align}
			F^*(x,\alpha)&=\frac{\|\alpha\|_g^2(x)}{\sqrt{g_x^{*2}(\alpha,\beta)+(1-\|\beta\|_g^2(x))\|\alpha\|_g^2(x)}+g_x^{*}(\alpha,\beta)} \nonumber \\
			&\geq \frac{\|\alpha\|_g^2(x)}{\sqrt{\|\alpha\|_g^2(x)\cdot \|\beta\|_g^2(x)+(1-\|\beta\|_g^2(x))\|\alpha\|_g^2(x)}+\|\alpha\|_g(x)\cdot \|\beta\|_g(x)}   \nonumber \\
			&= \frac{\|\alpha\|_g(x)}{1+\|\beta\|_g(x)} 
			\ \geq \ \frac{\|\alpha\|_g(x)}{1+a}.\label{masikbecsles} 
		\end{align}
		
		Combining \eqref{randers-volume}, \eqref{egyikbecsles} and \eqref{masikbecsles}, one gets that 
		\begin{equation}\label{normaekvivalencia}
			\frac{(1-a^2)^\frac{d+1}{2}}{(1+a)^p}\|u\|^p_{W^{1,p}_g(M)}\leq \|u\|^p_{W^{1,p}_F(M)}\leq \frac{1}{(1-a)^p}\|u\|^p_{W^{1,p}_g(M)}.
		\end{equation}

		Thus, by the continuous  embedding on the Riemannian manifold, we have that $$\|u\|^p_{W^{1,p}_F(M)}\geq \frac{(1-a^2)^\frac{d+1}{2}}{(1+a)^p} C \|u\|_{L^q(M)},$$ where $(p,q)$ is any $d$-admissible pair. 
		
		For the compact embedding, let $G$ be a compact connected subgroup of $\mathrm{Isom}_F(M)$, such that $m_F(y,\rho)\to\infty\ \mbox{ as }\ d_{F}(x_{0},y)\to\infty$ for some $x_0\in M$. According to Deng \cite[Proposition 7.1]{DengKonyv}, $G$ is a closed subgroup of the isometry group of the Riemannian manifold $(M,g)$.  
		
	 On the other hand, since $(1-a)d_g(x_0,y)\leq d_F(x_0,y)\leq (1+a)d_g(x_0,y)$, we have that if $m_F(y,\rho)\to\infty\ \mbox{ as }\ d_{F}(x_{0},y)\to\infty$, then $m\left(y,\frac{\rho}{(1+a)}\right)\to \infty$ as $d_g(x_0,y)\to \infty$.

	Now let $\{u_n\}_n$ be a bounded sequence in $W^{1,p}_{F,G}(M)$. From \eqref{normaekvivalencia}, it follows that $\{u_n\}_n$ is bounded in $W^{1,p}_G(M)$, thus by Theorems \ref{mainThm:Riemann1} \& \ref{mainThm:Riemann2} (condition $(\mathbf{EC})_G$ implies the compact embedding), there exists a subsequence $\{u_{n_k}\}_k$ which converges strongly to a function $u$ in $L^q(M)$, where $(p,q)$ is any $d$-admissible pair. This  concludes the proof.
	\end{proof}

	We emphasize that Theorem \ref{Finsler} is sharp in the following sense: if we consider the $d$-dimensional Finslerian Funk model $(B^d(1), F)$, which is a non-compact Finsler manifold of Randers-type having constant flag curvature $-\frac{1}{4}$, then we can construct a function $u \in W_F^{1,p}(B^d(1))$ such that $\|u\|_{L^q(B^d(1))}=+\infty$, in other words, $W_F^{1,p}(B^d(1))\bcancel{\hookrightarrow} L^q(B^d(1))$, for any $(p,q)$ $d$-admissible pair. As it turns out, $(B^d(1), F)$ is a non-reversible Finsler manifold with $\displaystyle \sup_{x\in M}\|\beta\|_g(x) = 1$, i.e. $r_F = \infty$, see Krist\'aly and Rudas \cite{KristalyRudas_NATMA}. Therefore, the continuous embeddings of Sobolev spaces do not necessarily hold on Randers spaces having infinite reversibility constant. Details are provided in the next example.

	\begin{example}\label{FinslerPelda} \rm 
		Let $d\geq 2$, $(p,q)$ be a $d$-admissible pair, and $B^d(1) = \{x \in \mathbb{R}^d : |x| < 1\}$ be the $d$-dimensional Euclidean open unit ball. 
		Consider the Funk metric $F :B^d(1)\times \mathbb{R}^d \to \mathbb{R}$ defined by
		$$F(x,y)=\frac{\sqrt{|y|^2-\left(|x|^2|y|^2-\langle x,y\rangle^2\right)}}{1-|x|^2}+\frac{\langle x,y\rangle}{1-|x|^2}.$$ 
		The pair $(B^d(1), F)$ is the Finslerian Funk model, see Cheng and Shen \cite[Example 2.1.2]{ChengShenkonyv}, and Shen \cite[Example 1.3.4]{Shen_Konyv}. According to  Shen \cite{Shen_Konyv}, we have that $\displaystyle d_F(0,x)=-\ln(1-|x|),\, x\in B^d(1).$ 
		
		Now,  consider the function $u: B^d(1) \to \mathbb{R}$ defined by $\displaystyle u(x)=\frac{|x|}{(1-|x|)^\frac{1}{t}}=e^{\frac{d_F(0,x)}{t}}\left(1-e^{-d_F(0,x)}\right),$ where $t$ is a parameter.  
		A direct calculation yields that 
		$$Du(x)=\frac{1}{t}e^\frac{d_F(0,x)}{t}\left[1+(t-1)e^{-d_F(0,x)}\right]Dd_F(0,x).$$
		By applying \eqref{tavolsag-derivalt}, we have that $F^*(x,Dd_F(0,x))=1$ for a.e. $x \in B^d(1)$, thus  
		\begin{align*}
			\|u\|_{W^{1,p}_F(B^d(1))}^p=&\left(\frac{1}{t}\right)^p\int_{B^d(1)} e^{\frac{p\cdot d_F(0,x)}{t}}\left[1+(t-1)e^{-d_F(0,x)}\right]^p \mathrm{d}V_F(x)\\&+\int_{B^d(1)} e^{\frac{p\cdot d_F(0,x)}{t}}\left(1-e^{-d_F(0,x)}\right)^p \mathrm{d}V_F(x).
		\end{align*}
		Therefore, since $\mathrm{d}V_F(x)=\mathrm{d}x$, we have that 
		\begin{align*}
			\|u\|_{W^{1,p}_F(B^d(1))}^p 
			&= \omega_{d-1}\left(\frac{1}{t}\right)^p\int_{0}^{1}\frac{(t-(t-1)s)^p}{(1-s)^{\frac{p}{t}}}s^{d-1}\mathrm{d}s \ + \ \omega_{d-1}\int_0^1\frac{s^p}{(1-s)^{\frac{p}{t}}}\cdot s^{d-1}\mathrm{d}s \\
			&\leq  \omega_{d-1} \int_{0}^{1} s^{d-1} (1-s)^{-\frac{p}{t}}\mathrm{d}s \ + \ \omega_{d-1}\int_0^1 s^{p+d-1} (1-s)^{-\frac{p}{t}} \mathrm{d}s\\
			&= \omega_{d-1} \left[ \mathtt{B}\left(d,1-\frac{p}{t}\right) + \mathtt{B}\left(p+d,1-\frac{p}{t}\right) \right].
		\end{align*}	
		where $\mathtt{B}$ denotes the Euler-Beta function.  

		\textbf{(\textbf{S}) \& (\textbf{MT}) cases:} Notice that  $$\displaystyle \|u\|_{L^q(B^d(1))}^q=\omega_{d-1}\mathtt{B}\left(q+d,1-\frac{q}{t}\right).$$
		Thus, if we choose $t\coloneqq \frac{p+q}{2}$, it turns out that $\|u\|_{W_F^{1,p}(B^d(1))}<\infty$ whereas $\|u\|_{L^q(B^d(1))}=+\infty$, i.e. $u\in W_F^{1,p}(B^d(1))\setminus L^q(B^d(1))$.
		
		\textbf{(\textbf{M}) case:} Let $t\coloneqq\frac{p^2}{d}>1$, and since $p>d$, it follows that
		$\|u\|_{W_F^{1,p}(B^d(1))}<\infty$. On the other hand, it is clear that $\|u\|_{L^\infty(B^d(1))}=+\infty$. In particular, it follows that $u\in W_F^{1,p}(B^d(1))\setminus L^\infty(B^d(1))$.
	\end{example}

	\subsection[Application: Elliptic PDE on Randers spaces]{Application: Multiple solutions for an elliptic PDE on Randers spaces } \label{sec:Application}

	In order to prove Theorem \ref{alkalmazas}, we recall an abstract tool, which is the following critical point result of Bonanno \cite{Bonanno} (which is
	actually a refinement of a general principle of Ricceri  \cite{Ricceri1,Ricceri2}):
	
	\begin{theorem}[\cite{Bonanno}, Theorem 2.1]\label{Bonanno}
		Let $X$ be a separable and reflexive real Banach space, and let $\Phi, J : X \to \mathbb{R}$ be two continuously G\^ateaux differentiable functionals, such that $\Phi(u) \geq 0$ for every $u\in X$. Assume that there exist $u_0, u_1 \in X$ and $\rho > 0$ such that 
		\begin{enumerate}
			\item[(1)]\label{nulladikF} $\Phi(u_0) = J(u_0) = 0$,
			\item[(2)]\label{elsoF} $\rho<\Phi(u_1)$,
			\item[(3)]\label{masodikF} $\displaystyle \sup_{\Phi(u)<\rho}J(u)<\rho \frac{J(u_1)}{\Phi(u_1)}$.
		\end{enumerate}
		Further, put $$\overline{a}=\zeta \rho \left(\rho \frac{J(u_1)}{\Phi(u_1)}-\sup_{\Phi(u)<\rho}J(u)\right)^{-1}, \ \mbox{ where } \zeta>1,$$
		and assume that the functional $\Phi - \lambda J$ is sequentially weakly lower semicontinuous, satisfies the Palais-Smale condition and
		\begin{enumerate}
			\item[(4)] $\displaystyle \lim_{\|u\|\to \infty}(\Phi(u) - \lambda J(u))=+\infty$, for all $\lambda\in [0, \overline{a}]$.
		\end{enumerate}
		Then there exists an open interval $\Lambda \subset [0, \overline{a}]$ and a number $\mu > 0$ such that for each $\lambda \in \Lambda$, the equation
		$\Phi'(u)-\lambda J'(u) = 0$ admits at least three solutions in $X$ having norm less than $\mu$.
	\end{theorem}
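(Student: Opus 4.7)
The plan is to apply the abstract three-critical-points result Theorem \ref{Bonanno} on $X = W^{1,p}_{F,G}(M)$ with the functionals
\[
\Phi(u) = \frac{1}{p}\|u\|^p_{W^{1,p}_F(M)}, \qquad J(u) = \int_M \alpha(x)\, H(u(x))\, \mathrm{d}V_F(x),
\]
so that critical points of $\Phi - \lambda J$ correspond, via the Green-type formula \eqref{Green}, to weak $G$-invariant solutions of \eqref{feladat}. Since $\alpha$ depends only on $d_F(x_0,\cdot)$ and $G$ acts by $F$-isometries fixing $x_0$, both $\Phi$ and $J$ are $G$-invariant; Palais' principle of symmetric criticality then promotes any critical point found in $W^{1,p}_{F,G}(M)$ to a critical point in the ambient $W^{1,p}_F(M)$, hence to a genuine weak solution of $(\mathcal{P}_\lambda)$.

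Compactness drives the verification of Bonanno's hypotheses. In the Morrey admissible case $(\mathbf{M})$ (since $p>d$), Theorem \ref{Finsler} applies: $(M,g)$ is Hadamard, $\sup_x\|\beta\|_g(x)<1$, and $\mathrm{Fix}_M(G)=\{x_0\}$ yields the Riemannian condition $(\mathbf{EC})_G$ via Theorem \ref{mainThm:Riemann1}, whence the Finsler analogue $m_F(y,\rho)\to\infty$ as $d_F(x_0,y)\to\infty$ follows from $(1-a)d_g\leq d_F\leq (1+a)d_g$ with $a=\sup_x\|\beta\|_g(x)$. Consequently $W^{1,p}_{F,G}(M)\hookrightarrow L^\infty(M)$ is compact. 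From this I derive: (i) $J$ is sequentially weakly continuous on $W^{1,p}_{F,G}(M)$ by dominated convergence combined with $\alpha\in L^1$ and the continuity of $H$; (ii) $\Phi$ is convex, continuous and weakly lower semicontinuous; (iii) $\Phi-\lambda J$ is coercive for every $\lambda\geq 0$, since $(A_2)$ gives $|H(s)|\leq C(|s|+|s|^w)$ and $\alpha\in L^1$, so $|J(u)|\leq C\|\alpha\|_{L^1}(\|u\|_\infty+\|u\|_\infty^w)\leq C'(\|u\|_{W^{1,p}_F}+\|u\|^w_{W^{1,p}_F})$ with $w<p$; (iv) the Palais--Smale condition follows in standard fashion from (i)--(iii).

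To construct the test function, pick $s_0>0$ from $(A_1)$ and choose $0<r<R$; define the $G$-invariant cutoff
\[
u_1(x) = \begin{cases} s_0, & d_F(x_0,x)\leq r,\\[1mm] s_0\,\dfrac{R-d_F(x_0,x)}{R-r}, & r<d_F(x_0,x)<R,\\[1mm] 0, & d_F(x_0,x)\geq R.\end{cases}
\]
Using \eqref{tavolsag-derivalt}, the norm equivalence \eqref{normaekvivalencia}, and the Bishop--Gromov volume comparison for $(M,g)$ with $\mathbf{K}\leq -\kappa^2$, $\Phi(u_1)$ is bounded above by an explicit constant times $s_0^p(R-r)^{-p}\mathrm{Vol}_F(B_F(x_0,R))$, while
\[
J(u_1)\geq H(s_0)\cdot \underset{d_F(x_0,\cdot)\leq r}{\mathrm{essinf}}\alpha\cdot\mathrm{Vol}_F(B_F(x_0,r))>0
\]
by the hypothesis on $\alpha$ and $(A_1)$. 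On sublevel sets $\Phi(u)<\rho$, the compact embedding gives $\|u\|_\infty \leq K\rho^{1/p}$, and for $\rho$ small this lies below the threshold where $(A_3)$ supplies $|H(s)|\leq C_q|s|^q$; hence $\sup_{\Phi(u)<\rho}J(u)\leq C\|\alpha\|_{L^1}\rho^{q/p}=o(\rho)$ as $\rho\to 0^+$, because $q>p$. Choosing $\rho\in(0,\Phi(u_1))$ sufficiently small makes $\sup_{\Phi(u)<\rho}J(u)<\rho\,J(u_1)/\Phi(u_1)$, verifying conditions (1)--(3) of Theorem \ref{Bonanno} with $u_0=0$. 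Together with coercivity (4), Bonanno's theorem produces the desired open interval $\Lambda\subset[0,\lambda^*]$ and the bound $\mu>0$.

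The main obstacle is the quantitative bookkeeping in the previous step: the embedding constant $K$, the volume comparison estimates on Finsler balls (combining Bishop--Gromov for $(M,g)$ and the reversibility bound $a<1$), and the explicit dependence of $\Phi(u_1)$ and $J(u_1)$ on $r,R,s_0$ must be coordinated so that the parameters $s_0,r,R,\rho$ can be chosen simultaneously to satisfy both $\rho<\Phi(u_1)$ and $C\rho^{q/p}<\rho\,J(u_1)/\Phi(u_1)$; the strict curvature upper bound $-\kappa^2$ enters precisely here to give usable two-sided control on the relevant ball volumes.
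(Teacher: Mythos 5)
There is a fundamental mismatch between what you were asked to prove and what you have written. The statement is the \emph{abstract} three-critical-points theorem of Bonanno: a purely functional-analytic result about two continuously G\^ateaux differentiable functionals $\Phi, J$ on an arbitrary separable reflexive Banach space $X$, asserting the existence of an interval $\Lambda$ of parameters for which $\Phi'-\lambda J'=0$ has three solutions of uniformly bounded norm. Your text never engages with this statement: instead you \emph{invoke} it as a black box and verify its hypotheses for the specific choice $X=W^{1,p}_{F,G}(M)$, $\Phi(u)=\frac1p\|u\|^p_{W^{1,p}_F(M)}$, $J(u)=\int_M\alpha H(u)\,\mathrm{d}V_F$. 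What you have produced is an outline of the proof of Theorem \ref{alkalmazas} (the application to problem \eqref{feladat} on Randers spaces), not a proof of Theorem \ref{Bonanno}. Note that the paper itself does not prove Theorem \ref{Bonanno} either; it imports it verbatim from Bonanno's article, which is why the statement carries the citation ``[\cite{Bonanno}, Theorem 2.1]''.

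A genuine proof of the stated theorem would have to operate at the abstract level: typically one shows, using Ricceri's variational principle (of which Bonanno's result is a refinement), that conditions (1)--(3) force the function $\lambda\mapsto\inf_{\Phi(u)<\rho}\sup_{\text{...}}$ governing the minimax geometry to be non-constant on $[0,\overline a]$, producing an open interval $\Lambda$ on which $\Phi-\lambda J$ admits a local minimum inside the sublevel set $\{\Phi<\rho\}$ that is \emph{not} a global minimum; coercivity (4) supplies a global minimizer, and the Palais--Smale condition together with a mountain-pass argument yields a third critical point, with the uniform bound $\mu$ coming from the equicoercivity of the family $\{\Phi-\lambda J\}_{\lambda\in\Lambda}$. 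None of these ingredients appears in your proposal, so as an argument for the stated theorem it is entirely missing rather than merely incomplete. (Separately, if your text were judged as a proof of Theorem \ref{alkalmazas}, it would be broadly aligned with the paper's strategy, but that is not the statement at hand.)
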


	For every $\lambda>0$ we define the energy functional associated with problem \eqref{feladat} as
	$${E}_\lambda: W^{1,p}_{F}(M)\to \mathbb{R}, \quad {E}_\lambda(u)=\Phi_0(u) - \lambda J_0(u),$$ where
	$$\Phi_0(u)=\frac{1}{p}\int_M{F^*}^p(x,Du(x))\,\mathrm{d}V_F(x) \ \ \mbox{ and } \ \ J_0(u)=\int_M \alpha(x)H(u(x))\,\mathrm{d}V_F(x).$$
	
	Since $(M,F)$ is a Randers space with $\displaystyle a \coloneqq \sup_{x\in M}\|\beta\|_g(x)<1$, the reversibility constant $r_F$ is finite, thus  $W^{1,p}_{F}(M)$ is a separable and reflexive Banach space, see Farkas, Krist\'aly, and Varga \cite{FarkasKristalyVarga_Calc}.

	Having in our mind Theorem \ref{Finsler}, we restrict the energy functional to the space $W^{1,p}_{F,G}(M)$. For simplicity, in the following we denote $$\mathcal{E}_\lambda = {E}_\lambda \rvert_{W^{1,p}_{F,G}(M)}, \ \Phi=\Phi_0\rvert_{W^{1,p}_{F,G}(M)},\ \mbox{ and } J=J_0\rvert_{W^{1,p}_{F,G}(M)}.$$
	
	In the sequel we prove that the energy functional ${E}_\lambda$ is $G$-invariant. Note that the $G$-invariance of the energy functional is an important tool in proving our theorem.

	\begin{lemma}\label{iso-lemma}
		Let $G$ be a compact connected subgroup of $\mathrm{Isom}_F(M)$ with $\mathrm{Fix}_M(G)=\{x_0\}$ for some $x_0\in M$. Then ${E}_\lambda$ is $G$-invariant, i.e., for every $\xi \in G$ and $u\in W^{1,p}_{F}(M)$ one has ${E_\lambda}(\xi u)={E}_\lambda(u)$. 	
	\end{lemma}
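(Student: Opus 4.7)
The plan is to verify $G$-invariance separately for the two summands of $E_\lambda$, namely $\Phi_0$ and $J_0$, and then combine. In both cases the key is a change of variables $y=\xi^{-1}x$ together with the three facts that (a) $\xi\in G\subset \mathrm{Isom}_F(M)$ preserves the Finsler norm $F$, (b) $\xi$ preserves the Hausdorff volume form $dV_F$, and (c) since $\mathrm{Fix}_M(G)=\{x_0\}$, the distance to $x_0$ is $G$-invariant, so the weight $\alpha$ (which depends only on $d_F(x_0,\cdot)$) is $G$-invariant as well.

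First I would handle $\Phi_0$. For $u\in W^{1,p}_F(M)$, by the chain rule applied to $(\xi u)(x)=u(\xi^{-1}x)$ one has $D(\xi u)(x)=Du(\xi^{-1}x)\circ d\xi^{-1}_x$. Because $F$ is invariant under $\xi$, its polar transform $F^*$ (defined intrinsically by \eqref{polar-transform} as a supremum involving $F$) satisfies $F^*(\xi y,\alpha\circ d\xi^{-1}_{\xi y})=F^*(y,\alpha)$ for every covector $\alpha\in T^*_y M$. Setting $y=\xi^{-1}x$ and $\alpha=Du(y)$ gives $F^*(x,D(\xi u)(x))=F^*(y,Du(y))$. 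Since $\xi$ also preserves $dV_F$ (it preserves $g$ and $\|\beta\|_g$ and hence the conformal factor in \eqref{randers-volume-def}), the change of variables yields
\begin{equation*}
\Phi_0(\xi u)=\frac{1}{p}\int_M {F^*}^p(x,D(\xi u)(x))\,dV_F(x)=\frac{1}{p}\int_M {F^*}^p(y,Du(y))\,dV_F(y)=\Phi_0(u).
\end{equation*}

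Next I would handle $J_0$. Applying the change of variables $y=\xi^{-1}x$ gives
\begin{equation*}
J_0(\xi u)=\int_M \alpha(x)H(u(\xi^{-1}x))\,dV_F(x)=\int_M \alpha(\xi y)H(u(y))\,dV_F(y).
\end{equation*}
Now $\alpha(\xi y)=\tilde\alpha(d_F(x_0,\xi y))$ for some $\tilde\alpha:[0,\infty)\to\mathbb{R}$, and since $\xi$ is an isometry of $(M,F)$ fixing $x_0$, we have $d_F(x_0,\xi y)=d_F(\xi^{-1}x_0,y)=d_F(x_0,y)$, so $\alpha(\xi y)=\alpha(y)$. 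Hence $J_0(\xi u)=J_0(u)$, and combining the two identities gives $E_\lambda(\xi u)=E_\lambda(u)$ for every $\xi\in G$, proving the claim.

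The only mild subtlety is checking that $\xi\in\mathrm{Isom}_F(M)$ genuinely preserves $F^*$ and $dV_F$. For $F^*$ this is an immediate consequence of the variational definition \eqref{polar-transform} combined with $F\circ d\xi=F$. For $dV_F$, recall from Deng's result cited after \eqref{Randers_metric} that $G\subset\mathrm{Isom}_g(M)$, so $\xi^*g=g$ and in particular $\xi$ preserves $dv_g$; moreover, since the $1$-form $\beta$ is determined by $F$ and $g$ through $\beta_x(y)=F(x,y)-\sqrt{g_x(y,y)}$, invariance of $F$ and $g$ forces invariance of $\beta$ and hence of $\|\beta\|_g$, which via \eqref{randers-volume-def} gives $\xi^*dV_F=dV_F$. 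With these two facts in hand the computation above is straightforward, and no further obstacle arises.
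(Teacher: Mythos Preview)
Your proof is correct and follows essentially the same approach as the paper's: split $E_\lambda=\Phi_0-\lambda J_0$, handle $\Phi_0$ via the chain rule, a change of variables, and the invariance of $F^*$ deduced from the variational definition \eqref{polar-transform} together with $F(\xi x,d\xi_x(y))=F(x,y)$, and handle $J_0$ via a change of variables plus the $G$-invariance of $d_F(x_0,\cdot)$ coming from $\mathrm{Fix}_M(G)=\{x_0\}$. Your additional justification that $\xi^*\mathrm{d}V_F=\mathrm{d}V_F$ (via $G\subset\mathrm{Isom}_g(M)$ and the determination of $\beta$ by $F$ and $g$) is a welcome detail that the paper leaves implicit.
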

	
	\begin{proof}
		First we focus on the $G$-invariance of the functional $\Phi_0$. Since $\xi \in G$, we have that (see Deng and Hou \cite{DengPJM}) 
		\begin{equation}\label{isome-finsler}
			F(\xi x, d\xi_x(X))=F(x,X),\ \forall x\in M, X\in T_xM.
		\end{equation}
		Since $(\xi u)(x)=u(\xi^{-1}x)$, by  the chain rule, one has
		\begin{align}
			\Phi_0(\xi u)&=\int_M {F^*}^p(x,D(\xi u)(x))\,\mathrm{d}V_F(x)\nonumber\\&=\int_M {F^*}^p(x,D(u(\xi^{-1}x)))\,\mathrm{d}V_F(x)\nonumber\\ &=\int_M {F^*}^p(x,D(u(\xi^{-1}x))d\xi^{-1}_{x})\,\mathrm{d}V_F(x) \ \ \ \ \ \ \ \ \ \ \ (\mbox{ change of var. }\xi^{-1}x=y)\nonumber\\&=\int_M {F^*}^p(\xi y,D(u(y))d\xi^{-1}_{\xi y}\,\mathrm{d}V_F(\xi y).\label{elsoFF}
		\end{align}
		Since $\xi \in G$, $\mathrm{d}V_F(\xi y)=\mathrm{d}V_F(y)$. On the other hand, by the definition of the polar transform \eqref{polar-transform} and relation \eqref{isome-finsler}, we have 
		\begin{align}
			F^*(\xi y,D(u(y))d\xi^{-1}_{\xi y})&=\sup_{w\in T_{\xi y}M\setminus\{0\}}\frac{D(u(y))d\xi^{-1}_{\xi y}(w)}{F(\xi y,w)}\ \ \ \ \ \ \ \ \ \ \ (w:=d\xi_y(z),\ z\in T_yM)\nonumber\\
			&=\sup_{z\in T_y M\setminus\{0\}}\frac{Du(y)d\xi^{-1}_{\xi y}(d\xi_ y(z))}{F(\xi y,d\xi_y(z))}\nonumber =\sup_{z\in T_y M\setminus\{0\}}\frac{Du(y)(z)}{F(y,z)}\\
			&=F^*(y,Du(y)).\label{masodikFF}
		\end{align}
		Combining \eqref{elsoFF} and \eqref{masodikFF}, we get the desired $G$-invariance of the functional $\Phi_0$. 
		
		Since $\xi \in G$ and $\alpha\in L^1(M) \cap L^\infty(M)$ is a non-zero, non-negative function which depends on $d_F(x_0, \cdot)$ and $\mathrm{Fix}_M(G)=\{x_0\}$, it turns out that for every $u\in W^{1,p}_{F}(M)$, we have $J_0(\xi u)={J}_0(u),$
		which concludes the proof.
	\end{proof}

	The principle of symmetric criticality of Palais (see Krist\'aly, R\u adulescu and Varga \cite[Theorem
	1.50]{KVR-book}) and Lemma \ref{iso-lemma} imply that the critical points of
	$\mathcal{E}_\lambda = {E}_\lambda \rvert_{W^{1,p}_{F,G}(M)}$ are also critical points of the original functional ${E}_\lambda$. Therefore, it is enough to find critical points of $\mathcal{E}_\lambda$.
	
	\begin{lemma}\label{coercivity}
		For every $\lambda \geq 0$, the functional ${E}_\lambda$ is coercive and bounded below.
	\end{lemma}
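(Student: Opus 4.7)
The plan is to show that the positive term $\Phi_0$ dominates $\lambda|J_0|$ on the whole of $W^{1,p}_F(M)$ by combining the subcritical growth supplied by $(A_2)$ with a Poincar\'e-type inequality that follows from the pinching $\mathbf K\leq -\kappa^2$ present in Theorem \ref{alkalmazas}.

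First I would handle the ``easy'' term $J_0$. Integrating the growth bound $(A_2)$ yields a constant $C_1>0$ with $|H(s)|\leq C_1(|s|+|s|^w)$ for every $s\in\mathbb R$, while $\alpha\in L^1(M)\cap L^\infty(M)$ implies, by interpolation, that $\alpha\in L^r(M)$ for every $r\in[1,\infty]$. H\"older's inequality with the conjugate exponents $p/(p-1)$ and $p/(p-w)$ (both finite since $1<w<p$) then produces
\[
|J_0(u)|\;\leq\; C_1\bigl(\|\alpha\|_{L^{p/(p-1)}_F}\|u\|_{L^p_F}+\|\alpha\|_{L^{p/(p-w)}_F}\|u\|_{L^p_F}^w\bigr)\;\leq\; C_2\bigl(\|u\|_{W^{1,p}_F(M)}+\|u\|_{W^{1,p}_F(M)}^w\bigr),
\]
so $|J_0(u)|$ grows at most like $\|u\|^w$ at infinity.

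Next, to render $\Phi_0$ coercive in the full $W^{1,p}_F$-norm, I would invoke the pinched negative curvature: on a Cartan--Hadamard manifold $(M,g)$ with $\mathbf K\leq -\kappa^2$, a McKean-type $p$-Poincar\'e inequality is available, namely there exists $\Lambda=\Lambda(p,d,\kappa)>0$ such that
\[
\int_M |u|^p\,\mathrm dv_g \;\leq\; \Lambda\int_M |\nabla_g u|_g^p\,\mathrm dv_g,\qquad u\in W^{1,p}_g(M).
\]
Combining this with the pointwise lower bound $F^*(x,Du)\geq (1+a)^{-1}|\nabla_g u|_g$ from \eqref{masikbecsles}, the volume comparison $(1-a^2)^{(d+1)/2}\,\mathrm dv_g\leq \mathrm dV_F\leq \mathrm dv_g$ from \eqref{randers-volume}, and the norm equivalence \eqref{normaekvivalencia}, I can then estimate
\[
\Phi_0(u) \;=\; \frac{1}{p}\int_M {F^*}^p(x,Du)\,\mathrm dV_F \;\geq\; c_0\|\nabla_g u\|_{L^p_g}^p \;\geq\; c_1\|u\|_{W^{1,p}_F(M)}^p,
\]
for positive constants $c_0,c_1$ depending only on $\kappa,p,d$ and $a$.

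Putting the two estimates together gives
\[
E_\lambda(u) \;\geq\; c_1\|u\|_{W^{1,p}_F(M)}^p-\lambda C_2\bigl(\|u\|_{W^{1,p}_F(M)}+\|u\|_{W^{1,p}_F(M)}^w\bigr),
\]
and since $w<p$ the right-hand side tends to $+\infty$ as $\|u\|_{W^{1,p}_F(M)}\to\infty$, which is the desired coercivity. Boundedness below follows at once, since the scalar function $t\mapsto c_1 t^p-\lambda C_2(t+t^w)$ attains a finite minimum on $[0,\infty)$. The main obstacle I foresee is the third step: one has to justify a $p$-Poincar\'e/McKean inequality on $(M,g)$ under only the upper curvature bound $\mathbf K\leq -\kappa^2$ (the classical statement is for $p=2$), and then transfer it cleanly to the Randers setting through the equivalence of the $W^{1,p}_g$- and $W^{1,p}_F$-norms without contaminating the constants by the reversibility parameter $a$.
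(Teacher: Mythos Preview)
Your argument is correct and is essentially the paper's own proof: the paper likewise invokes a McKean-type $p$-spectral lower bound (citing Yin and He \cite{Yin-He}, so your ``main obstacle'' is already handled in the literature) to get $\int_M{F^*}^p(x,Du)\,\mathrm dV_F\geq c(d,a,p,\kappa)\|u\|_{W^{1,p}_g(M)}^p$ via \eqref{randers-volume} and \eqref{masikbecsles}, and then uses $(A_2)$ together with $w<p$ to conclude. The only cosmetic difference is that the paper bounds $J_0$ through the Morrey embedding $\|u\|_{L^\infty}\leq c_\infty\|u\|_{W^{1,p}_g}$ (available since $p>d$) paired with $\|\alpha\|_{L^1}$, whereas you use H\"older with $\|u\|_{L^p}$ and intermediate $L^r$-norms of $\alpha$; both routes give the same $\|u\|+\|u\|^w$ control.
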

	
	\begin{proof}
		Using a McKean-type inequality (see for instance Yin and He \cite[Theorem 0.6]{Yin-He}), we have that 
		$$\lambda_{1,g} \coloneqq \inf_{u \in W^{1,p}_g(M)}\frac{\displaystyle \int_M|\nabla_{g} u|^p{\rm d}v_{g}}{\displaystyle \int_M |u|^p\,{\rm d}v_{g}}\geq \left(\frac{(d-1)\kappa}{p}\right)^p,$$ 
		therefore, 
		$$\int_M |\nabla_g u|^p{\rm d}v_{g}\geq \frac{(d-1)^p\kappa^p}{p^p+(d-1)^p\kappa^p}\|u\|^p_{W^{1,p}_g(M)},\ \ \ u \in W^{1,p}_g(M).$$ 
		Using \eqref{randers-volume}, \eqref{masikbecsles}, and denoting $c(d,a,p,\kappa) \coloneqq \frac{(1-a^2)^{(d+1)/2}}{(1+a)^p} \cdot \frac{(d-1)^p\kappa^p}{p^p+(d-1)^p\kappa^p}$ we obtain that 
		\begin{equation} \label{D-norma_osszehasonlitas}
			\int_M {F^*}^p(x,Du(x))\,\mathrm{d}V_F(x) \geq c(d,a,p,\kappa)\|u\|^p_{W^{1,p}_g(M)},\ \ \ u \in W^{1,p}_g(M).
		\end{equation}
		From \hyperref[f2]{($A_2$)}, for every $u \in W^{1,p}_g(M)$ it follows that 
		$${E}_\lambda(u)\geq \frac{c(d,a,p,\kappa)}{p}\|u\|^p_{W^{1,p}_g(M)}-\lambda C\|\alpha\|_{L^1(M)}\left(c_\infty\|u\|_{W^{1,p}_g(M)}+c_\infty^w \|u\|^w_{W^{1,p}_g(M)}\right).$$ Since $w<p$, the claim clearly follows. 
	\end{proof}

	\begin{lemma}\label{PS}
		For every $\lambda \geq 0$, $\mathcal{E}_\lambda$ satisfies the Palais–Smale condition on $W^{1,p}_{F,G}(M)$.
	\end{lemma}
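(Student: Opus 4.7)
My plan is to run the standard three-step Palais-Smale argument for $p$-Laplacian-type variational functionals: extract a weakly convergent subsequence by coercivity and reflexivity, upgrade the compact lower-order term to strong convergence via Theorem \ref{Finsler}, and conclude using an $(S_+)$-type property of the principal part $\Phi$. Let $\{u_n\}_n \subset W^{1,p}_{F,G}(M)$ be a Palais-Smale sequence for $\mathcal{E}_\lambda$. By Lemma \ref{coercivity}, coercivity forces $\|u_n\|_{W^{1,p}_F(M)}$ to remain bounded. Since $a := \sup_{x\in M}\|\beta\|_g(x) < 1$ makes $r_F$ finite, $W^{1,p}_F(M)$ is reflexive, so its closed subspace $W^{1,p}_{F,G}(M)$ inherits reflexivity; up to a subsequence, $u_n \weak u$ in $W^{1,p}_{F,G}(M)$. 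Because $p > d$ and the hypotheses of Theorem \ref{alkalmazas} imply (via Theorem \ref{mainThm:Riemann1}) that the Finslerian expansion condition holds, Theorem \ref{Finsler} yields the compact embedding $W^{1,p}_{F,G}(M) \hookrightarrow L^\infty(M)$, hence $u_n \to u$ strongly in $L^\infty(M)$ and, after further subsequencing, pointwise almost everywhere.

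Combining this uniform $L^\infty$ bound with the subcritical growth condition $(A_2)$ and $\alpha \in L^1(M)\cap L^\infty(M)$, the dominated convergence theorem gives
\[
\langle J'(u_n), u_n - u\rangle = \int_M \alpha(x)\, h(u_n(x))\, (u_n(x) - u(x))\, \mathrm{d}V_F(x) \to 0.
\]
Combined with $\langle \mathcal{E}'_\lambda(u_n), u_n - u \rangle \to 0$, this yields $\langle \Phi'(u_n), u_n - u \rangle \to 0$; since $u_n \weak u$ also implies $\langle \Phi'(u), u_n - u \rangle \to 0$, we obtain $\langle \Phi'(u_n) - \Phi'(u), u_n - u \rangle \to 0$.

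The main obstacle is to promote this limit to strong convergence $u_n \to u$ in $W^{1,p}_{F,G}(M)$, i.e.\ to establish an $(S_+)$-type property of $\Phi$. The strategy is to exploit the (uniform) strict convexity of the Lagrangian $y \mapsto \tfrac{1}{p}F^*(x,y)^p$, guaranteed uniformly in $x \in M$ by $l_F > 0$, which is equivalent to $a < 1$. By analogy with the Euclidean $p$-Laplacian, one derives a Finslerian monotonicity estimate which, after integration against $u_n - u$ and rewriting the pairing via Green's identity \eqref{Green} and the Legendre transform, bounds a suitable power of $\|u_n - u\|_{W^{1,p}_F(M)}$ from above by $\langle \Phi'(u_n) - \Phi'(u), u_n - u \rangle$, with a constant independent of $n$ and $x$. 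The delicate point is precisely the uniformity of this monotonicity estimate over $M$; the Randers comparison bounds \eqref{egyikbecsles}--\eqref{masikbecsles} together with $l_F > 0$ provide the control needed to close the argument and conclude $\|u_n - u\|_{W^{1,p}_F(M)} \to 0$, which together with the Randers-Riemannian norm equivalence \eqref{normaekvivalencia} ends the proof.
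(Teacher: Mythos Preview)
Your proposal is correct and follows essentially the same route as the paper: coercivity gives boundedness, reflexivity (via $r_F<\infty$) gives a weakly convergent subsequence, Theorem~\ref{Finsler} upgrades this to strong $L^\infty$-convergence, the nonlinear term $\langle J'(u_n),u_n-u\rangle$ then vanishes, and the $(S_+)$-type monotonicity of $\Phi$ forces strong convergence. The paper makes the last step concrete with the pointwise inequality
\[
(Du-Du_k)\bigl(\boldsymbol{\nabla}_F u\,F^{*\,p-2}(Du)-\boldsymbol{\nabla}_F u_k\,F^{*\,p-2}(Du_k)\bigr)\ \ge\ l_F\,F^{*\,p}(x,Du-Du_k),
\]
obtained from the mean value theorem and the uniformity constant $l_F>0$; this is precisely the ``Finslerian monotonicity estimate'' you allude to, and you correctly single out $l_F>0$ (equivalently $a<1$) as the crucial uniform constant. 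One small point worth making explicit: the monotonicity controls only $\int_M F^{*\,p}(x,D(u_n-u))\,\mathrm{d}V_F$, i.e.\ the gradient seminorm; to conclude convergence in the full $W^{1,p}_F$-norm you need the McKean-type inequality from Lemma~\ref{coercivity} (valid here because the sectional curvature of $(M,g)$ is bounded above by $-\kappa^2<0$), which makes the gradient seminorm equivalent to the full norm.
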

	
	\begin{proof}
		Let $\{u_k\}_k$ be a sequence in $W^{1,p}_{F,G}(M)$ such that
		$\{\mathcal{E}_{\lambda}(u_k)\}_k$ is bounded and $\|
		\mathcal{E}'_{\lambda}(u_k)\|_{*}\to 0.$ 
		Since  $\mathcal{E}_{\lambda}$ is coercive, the sequence $\{u_k\}_k$ is bounded in $W^{1,p}_{F,G}(M)$.
		Therefore, up to a subsequence, $u_k\weak u$ weakly in $W^{1,p}_{F,G}(M)$ for some $u\in W^{1,p}_{F,G}(M)$. Hence, due to Theorem \ref{Finsler} and Theorem \ref{mainThm:Riemann1}, it follows that $u_k\to u$ strongly in $L^\infty(M)$. In particular, we have that
		\begin{equation}\label{PS-1}
			\mathcal E_\lambda'(u)(u-u_k) \to 0 \quad {\rm and} \quad  \mathcal{E}_\lambda'(u_k)(u-u_k) \to 0 \quad {\rm as} \quad k\to \infty.
		\end{equation}

		On the one hand, it is easy to verify that 
		\begin{align*}
			&\int_{M}(Du(x)-Du_k(x))(\boldsymbol{\nabla}_F
			u(x) {F^*}^{p-2}(x,Du(x))-\boldsymbol{\nabla}_F
			u_k(x){F^*}^{p-2}(x,Du_k(x)))\,{\text d}V_{F}(x) \\
			&=  \mathcal
			E_\lambda'(u)(u-u_k)- \mathcal E_\lambda'(u_k)(u-u_k)
			+\lambda\int_{M}\alpha(x)[h(u_k)-h(u)](u_k(x) - u(x))\,{\text d}V_{F}(x).
		\end{align*}
		
		On the other hand, we have 
		\begin{align}\label{eziskell}
			&\left|\int_{M}\alpha(x)[h(u_k)-h(u)](u_k(x)-u(x))\,{\text
				d}V_{F}(x)\right| \leq \nonumber \\
			&\leq 2\|\alpha\|_{L^1(M)}\cdot\max\{|h(s)|: |s|\leq \|u\|_{L^\infty(M)}+1\}\|u_k-u\|_{L^\infty(M)}.
		\end{align}
		
		The mean value theorem implies that for all $x\in M$, 
		$$
		(Du(x)-Du_k(x))(\boldsymbol{\nabla}_F u(x) {F^*}^{p-2}(x,Du(x))-\boldsymbol{\nabla}_F
		u_k(x){F^*}^{p-2}(x,Du_k(x)))\geq l_F F^{*p}(x,Du(x)-Du_k(x)),
		$$
		where $l_{F}$ is the uniformity constant associated to $F$ (see \eqref{uniformity_const}). Since $(M,F)$ is a Randers space with $\displaystyle a \coloneqq \sup_{x\in M}\|\beta\|_g(x)<1$, it follows that $l_F > 0$, thus $u_k\to u$ in $W^{1,p}_{F,G}(M)$, which proves the claim.
	\end{proof}

	\begin{lemma}\label{LSC}
		For every $\lambda\geq 0$ the functional $\mathcal{E}_\lambda$ is sequentially weakly lower semicontinuous.
	\end{lemma}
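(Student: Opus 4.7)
The plan is to write $\mathcal E_\lambda = \Phi - \lambda J$ and to check the two pieces separately: sequentially weak lower semicontinuity of $\Phi$, and sequentially weak continuity of $J$; their algebraic sum is then sequentially weakly lower semicontinuous for every $\lambda\ge 0$.

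For $\Phi$, I would argue via convexity in the reflexive Banach space $W^{1,p}_{F,G}(M)$. Since $\sup_{x\in M}\|\beta\|_g(x)<1$, the cometric $\alpha\mapsto F^*(x,\alpha)$ is a Minkowski (possibly asymmetric) norm on each cotangent fibre $T_x^*M$: it is positively homogeneous of degree one and subadditive, so $\alpha\mapsto {F^*}^p(x,\alpha)$ is convex for every $p>1$. Combined with the linearity of the distributional differential $u\mapsto Du$, this makes $\Phi_0$ convex on $W^{1,p}_F(M)$. For norm continuity of $\Phi_0$ I would use the pointwise estimate $|a^p-b^p|\le p\max(a,b)^{p-1}|a-b|$ and H\"older's inequality, together with the sandwich bounds \eqref{egyikbecsles}--\eqref{masikbecsles} tying $F^*$ to the Riemannian dual norm; a convex, norm-continuous functional on a reflexive Banach space is automatically sequentially weakly lower semicontinuous, and the property restricts to the closed $G$-invariant subspace.

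For $J$, I would exploit Theorem \ref{Finsler} in the (\textbf{M}) admissible regime, applicable since $p>d$ in Theorem \ref{alkalmazas}. Given $u_k\rightharpoonup u$ in $W^{1,p}_{F,G}(M)$, the theorem yields $u_k\to u$ in $L^\infty(M)$; setting $R:=\sup_k\|u_k\|_{L^\infty(M)}+\|u\|_{L^\infty(M)}<\infty$, continuity of $h$ gives uniform continuity of $H$ on $[-R,R]$, hence $H\circ u_k \to H\circ u$ uniformly on $M$. Since $\alpha\in L^1(M)$, one has
\[
|J(u_k)-J(u)|\le \|\alpha\|_{L^1(M)}\,\|H\circ u_k-H\circ u\|_{L^\infty(M)}\longrightarrow 0,
\]
so $J$ is in fact sequentially weakly continuous, which is stronger than required.

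The only mildly delicate point I anticipate is the fibrewise convexity and continuity of ${F^*}^p(x,\cdot)$; once the Minkowski-norm character of $F^*(x,\cdot)$ is granted by the assumption $\sup_{x\in M}\|\beta\|_g(x)<1$, the rest is a standard application of convex analysis in reflexive spaces combined with the compact embedding already established.
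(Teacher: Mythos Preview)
Your proof is correct and follows essentially the same approach as the paper: decompose $\mathcal E_\lambda=\Phi-\lambda J$, observe that $\Phi$ is sequentially weakly lower semicontinuous (the paper says ``as a norm-type function'' while you spell out the convexity argument), and show $J$ is sequentially weakly continuous via the compact embedding $W^{1,p}_{F,G}(M)\hookrightarrow L^\infty(M)$. The only cosmetic difference is that for $J$ the paper argues by contradiction using the mean value theorem with $h$, whereas you argue directly via uniform continuity of $H$ on the compact range; both are equivalent once $\|u_k-u\|_{L^\infty}\to 0$ is in hand.
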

	\begin{proof}
		As a norm-type function, $\Phi$ is sequentially weakly lower semicontinuous, therefore it  suffices to prove that $J$ is sequentially weakly continuous. To this end, consider a sequence $\{u_n\}_n$ in $W^{1,p}_{F,G}(M)$ which converges weakly to $u\in W^{1,p}_{F,G}(M)$, and suppose that $$J(u_n)\cancel{\to} J(u_n)\ \mbox{ as }n\to \infty.$$ Thus, there exist $\varepsilon>0$ and a subsequence of $\{u_n\}_n$, denoted again by $\{u_n\}_n$, such that $u_n\to u$ in $L^\infty(M)$ and  $$0<\varepsilon\leq |J(u_n)-J(u)|,\ \mbox{ for every }n\in \mathbb{N}.$$ Thus, by the mean value theorem (see also \eqref{eziskell}), there exists $\theta_n\in(0,1)$ such that
		\begin{align*}
			0<\varepsilon&\leq \left|\langle J'(u+\theta_n(u_n-u)),u_n-u\rangle_{W^{1,p}_F}\right|\\ &\leq \int_M \alpha(x)|h(u+\theta_n(u_n-u))|\cdot |u_n-u|\,\mathrm{d}V_F(x)\\&\leq  \|\alpha\|_{L^1(M)}\max\{|h(s)|:|s|\leq \|u\|_{L^\infty(M)}+1\}\cdot\|u_n-u\|_{L^\infty(M)}.
		\end{align*}
		Note that the last term tends to $0$, which provides a contradiction.
	\end{proof}	
	
	\begin{proof}[Proof of Theorem \ref{alkalmazas}]
		Let $s_0 > 0$ be given by condition \hyperref[f1]{($A_1$)}. We recall that $\displaystyle \sup_{R>0}\underset{d_{F}(x_{0},x)\leq R}{\mathrm{essinf}}\alpha(x) > 0,$ thus we choose an $R>0$ such that $\alpha_R:=\underset{d_{F}(x_{0},x)\leq R}{\mathrm{essinf}}\alpha(x) > 0.$
		For $r<\displaystyle R\frac{1-a}{1+a}$, we define the following function 
		$$u_{s_0,R,r}=\begin{cases}
			0, & x\in M\setminus B_F(x_0,R)\\
			\frac{s_0}{R-r}(R-d_F(x_0,x)),&  x\in B_F(x_0,R)\setminus B_F(x_0,r)\\
			s_0,&  x\in B_F(x_0,r)
		\end{cases}$$
		
		Recall that $r_{F}>0$ is the reversibility constant on $(M,F)$, see \eqref{reverzibilis}, i.e.  by the eikonal identity \eqref{tavolsag-derivalt} we have that $\frac{1}{r_F}\leq F^*(x,-Dd_F(x_0,x))\leq r_F.$ Therefore, 
		\begin{align*}
			\left(\frac{s_0}{R-r}\right)^p \frac{1}{r_F^p}\left(\mathrm{Vol}_F(B_F(x_0,R))-\mathrm{Vol}_F(B_F(x_0,r))\right)&\leq \int_M {F^*}^p(x,Du_{s_0,R,r}(x))\,\mathrm{d}V_F(x) 
			\\& \leq \left(\frac{s_0}{R-r}\right)^p {r_F^p} \mathrm{Vol}_F(B_F(x_0,R)).
		\end{align*}
		
		%
		%


		By $0\leq u_{s_0,R,r}\leq s_0$ and hypothesis \hyperref[f1]{($A_1$)} we have that \begin{align*}
			J(u_{s_0,R,r})&=\int_M \alpha(x) H(u_{s_0,R,r}(x))\, \mathrm{d}V_F(x)=\int_{B_F(x_0,R)}\alpha(x) H(u_{s_0,R,r}(x))\, \mathrm{d}V_F(x)\\ &\geq H(s_0)\alpha_R \mathrm{Vol}_F(B_F(x_0,r))>0.
		\end{align*}
		
		On the other hand, by \hyperref[f3]{($A_3$)}, we may fix $s_1\in (0,1]$ and $C_1>0$ such that 
		$$H(s)\leq C_1 |s|^q,\ |s|<s_1.$$ 
		By \hyperref[f2]{($A_2$)} we have that 
		$$|H(s)|\leq C(1+|s|^{w-1})|s|\leq C\frac{1+s_1^{w-1}}{s_1^{q-1}}|s|^q,\ |s|\geq s_1.$$ 
		Choosing $C_2=\max\left\{C_1, C\frac{1+s_1^{w-1}}{s_1^{q-1}}\right\}$, we get that 
		$$H(s)\leq C_2|s|^q,\ s\in \mathbb{R}.$$ 
		Therefore, 
		\begin{equation}\label{becslesJre}
			J(u)=\int_M \alpha(x)H(u(x))\,\mathrm{d}V_F(x) \leq C_2 \|\alpha\|_{L^1(M)} c_\infty^q \|u\|_{W^{1,p}_g(M)}^q.
		\end{equation}
		We claim that 
		\begin{equation}\label{fontoshatarertekJ}
			\limsup_{\rho\to 0}\frac{\displaystyle \sup\left\{J(u):\int_M {F^*}^p(x,Du(x))\,\mathrm{d}V_F(x)\leq p\rho\right\}}{\rho}\leq 0.
		\end{equation}
		To prove the previous claim, first observe that by \eqref{D-norma_osszehasonlitas} we have that 
		$$\frac{\displaystyle \sup\left\{J(u):\int_M {F^*}^p(x,Du(x))\,\mathrm{d}V_F(x)\leq p\rho\right\}}{\rho}\leq \frac{\displaystyle \sup\left\{J(u):c(d,a,p,\kappa)\|u\|^p_{W^{1,p}_g(M)}\leq p\rho\right\}}{\rho}.$$  
		On account of \eqref{becslesJre}, we have that 
		$$ \frac{\displaystyle \sup\left\{J(u):c(d,a,p,\kappa)\|u\|^p_{W^{1,p}_g(M)}\leq p\rho\right\}}{\rho}\leq \frac{C_2 \|\alpha\|_{L^1(M)}c_\infty^q \left(\frac{p\rho}{c(d,a,p,\kappa)}\right)^\frac{q}{p}}{\rho}\to 0 \quad \text{when } \rho \to 0,$$  
		since $q>p$, which proves the claim. 
		
		By \eqref{fontoshatarertekJ}, we may chose $\rho_0>0$ such that 
		$$\rho_0< c(d,a,p,\kappa)\|u_{s_0,R,r}\|^p_{W^{1,p}_g(M)}\leq \int_M {F^*}^p(x,Du_{s_0,R,r}(x))\,\mathrm{d}V_F(x),$$ and 
		$$\frac{\displaystyle \sup\left\{J(u):\int_M {F^*}^p(x,Du(x))\,\mathrm{d}V_F(x)\leq p\rho_0\right\}}{\rho_0}< \frac{J(u_{s_0,R,r})}{\Phi(u_{s_0,R,r})}.$$ 
		
		In Theorem \ref{Bonanno} we choose $u_1=u_{s_0,R,r}$ and $u_0 = 0$, and observe that the hypotheses \hyperref[elsoF]{(1)} and \hyperref[masodikF]{(2)} are satisfied.  We define 
		$$\overline{a}=\frac{1+\rho_0}{\frac{J(u_{s_0,R,r})}{\Phi(u_{s_0,R,r})}-\frac{\sup\left\{J(u):\Phi(u)\leq\rho_0\right\}}{\rho_0}}.$$ 
		Taking into account Lemmas \ref{coercivity}, \ref{PS} and \ref{LSC}, all the assumptions of Theorem \ref{Bonanno} are verified. Thus there exists an open interval $\Lambda\subset [0,\overline{a}]$ and a number $\mu > 0$ such that for each $\lambda\in \Lambda$, the equation $\mathcal{E}_\lambda'(u)=\Phi'(u)-\lambda J'(u)$ admits at least three solutions in $W^{1,p}_{F,G}(M)$ having $W^{1,p}_{F}(M)$-norms less than $\mu$. This concludes the proof.
	\end{proof}

	\section*{Acknowledgment}
	The authors are supported by the National Research, Development and Innovation Fund of Hungary, financed under the K\_18 funding scheme, Project No. 127926. C. Farkas is also supported by the Sapientia Foundation - Institute for Scientific Research,
	Romania, Project No. 17/11.06.2019.
	

\end{document}